\newcommand{\R}{\mathbb{R}}
\DeclareMathOperator{\dist}{dist}
\DeclareMathOperator{\diverg}{div}
\DeclareMathOperator{\diam}{diam}
\newtheorem{theorem}{Theorem}[section]
\newtheorem{lemma}[theorem]{Lemma}
\newtheorem{corollary}[theorem]{Corollary}
\newtheorem{proposition}[theorem]{Proposition}
\numberwithin{equation}{section}
\begin{document}

\title[Boundedness of stable solutions]
{A new proof of the boundedness results for stable solutions to semilinear elliptic equations}

\author[X. Cabr\'e]{Xavier Cabr\'e}
\address{﻿X.C.\textsuperscript{1,2,3} ---
\textsuperscript{1}ICREA, Pg.\ Lluis Companys 23, 08010 Barcelona, Spain \& 
\textsuperscript{2}Universitat Polit\`ecnica de Catalunya, Departament de Matem\`{a}tiques, 
Diagonal 647, 08028 Barcelona, Spain \& 
\textsuperscript{3}BGSMath, Campus de Bellaterra, Edifici C, 08193 Bellaterra, Spain.
}
\email{xavier.cabre@upc.edu}

\thanks{X.C. is supported by grants MTM2017-84214-C2-1-P and MdM-2014-0445 (Government of Spain), 
and is a member of the research group 2017SGR1392 (Government of Catalonia).}

\begin{abstract}
We consider the class of stable solutions
to semilinear equations $-\Delta u=f(u)$ in a bounded smooth
domain of $\mathbb{R}^n$. Since 2010 an interior a priori  $L^\infty$ bound for stable solutions
is known to hold in dimensions $n\le 4$ for all $C^1$ nonlinearities~$f$. In the radial case, the same is true for $n\leq 9$.
Here we provide with a new, simpler, and unified proof of these results. It establishes, in addition, 
some new estimates in higher dimensions ---for instance $L^p$ bounds for every finite~$p$ in dimension 5.

Since the mid nineties, the existence of an $L^\infty$ bound holding for
all $C^1$ nonlinearities when $5\leq n\leq 9$ was a challenging open problem. 
This has been recently solved by A. Figalli, X. Ros-Oton, J. Serra, and the author,
for nonnegative nonlinearities, in a forthcoming paper.
\end{abstract}

\maketitle
\vspace {-.2in}
\begin{center}
{\em Dedicated to Luis Caffarelli, with friendship and great admiration}
\end{center}

\section{Introduction and results}

We consider the semilinear elliptic equation
\begin{equation}\label{eqsem}
-\Delta u = f(u) \quad\mbox{in $\Omega$,}
\end{equation}
as well as the associated Dirichlet problem
\begin{equation}\label{problem}
\left\{
\begin{array}{rcll}
-\Delta u &=& f(u) & \quad\mbox{in $\Omega$}\\
u &=& 0  & \quad\mbox{on $\partial\Omega$,}\\
\end{array}\right.
\end{equation}
where $\Omega \subset \R^n$ is a smooth bounded domain and $f$ is a $C^1$ nonlinearity.
A smooth solution $u$ of \eqref{eqsem}, or of \eqref{problem}, is said to be \emph{stable} if 
\begin{equation} \label{stability}
\int_{\Omega} f'(u)\xi^2\,\,dx
\leq
\int_{\Omega} |\nabla\xi|^2\,\,dx
\end{equation}
for all $C^1(\overline\Omega)$ functions $\xi$ such that $\xi_{|\partial\Omega}\equiv 0$. 
This is equivalent to assuming the nonnegativeness of the first 
Dirichlet eigenvalue in~$\Omega$ for the linearized operator $-\Delta-f'(u)$ (or second variation
of energy) at $u$. 
As a consequence, \emph{local minimizers} of the energy (i.e., minimizers under small perturbations
having same boundary values as $u$) are stable
solutions. 

Below we will present another interesting class of solutions ---the so called extremal solutions--- 
which provides with many examples of stable solutions. 

This article is concerned with the existence of a priori $L^\infty$ bounds for stable solutions 
of \eqref{eqsem}, or of \eqref{problem}, that hold for all $C^1$ nonlinearities $f$. 
It is Ha\"{\i}m Brezis who stressed, since the mid-nineties, the significance of obtaining 
$L^\infty$ bounds for stable solutions to
problem \eqref{problemla} below with $f$ satisfying \eqref{eq1.30}.

It is well known that $L^\infty$ estimates can only hold in dimensions $n\leq 9$. Indeed, when 
\begin{equation}
n\geq 10, \quad u=-2\log\vert{x}\vert, \quad \text{and}\quad f(u)=2(n-2)e^u,
\end{equation}
we are in the presence of a singular $H^1_0(B_1)$ stable weak solution of \eqref{problem} in $\Omega=B_1$. 
On the other hand, the results of
Capella and the author~\cite{CC} show that, in the radial case, 
such example can not exist for $n\leq 9$. More precisely, in 2006, \cite{CC} established the boundedness 
of any $H^1_0(B_1)$ stable weak solution of \eqref{problem} in the unit ball, 
for every $C^1$ nonlinearity $f$, whenever $n\leq 9$. 
The result gave also an $L^\infty(B_1)$ a priori bound with constants independent of $f$.

In the nonradial case, instead, an $L^\infty$ bound holding for all nonlinearities was known to hold only
in dimensions $n\leq 4$. This was established in 2010 by the author~\cite{Cabre}. Since the mid nineties, 
the existence of an $L^\infty$ bound holding for
all smooth nonlinearities when $5\leq n\leq 9$ was an open problem 
that has been very recently solved by Figalli, Ros-Oton, Serra, and the author \cite{CFRS}, for nonnegative nonlinearities.

The purpose of the current article is to provide a new and simpler
proof of the $L^{\infty}$ bound up to dimension 4 from \cite{Cabre}. It no longer relies on
the Michael-Simon and Allard Sobolev inequality ---a quite deep result that was applied
on every level set of the solution in \cite{Cabre}--- but instead on a new weighted Hardy inequality
for functions defined on hypersurfaces. It will be applied to the hypersurfaces given by the level sets of the solution.
Besides being simpler, the new proof is of 
interest also by the following features that the proof in \cite{Cabre} did not have, namely:

\begin{itemize}
 \item The new proof gives also the bound up to dimension 9 in the radial case 
---in a unified way with the one for $n\leq 4$ in general domains. 

 \item It provides with estimates in dimensions $n\geq 5$, such as \eqref{estgrad} below under assumption
 \eqref{condalpha}. This estimate leads to $L^p$ bounds for every finite $p$ in dimension 5. It also
gives $L^p$ estimates in higher dimensions which, however, are not optimal except for the radial case.  

 \item The new proof allows for first time to consider stable solutions of equation \eqref{eqsem} 
 independently of their boundary values. This will also be the case in the forthcoming article~\cite{CFRS}.

 \item Finally, the new proof allows for extensions to other nonlinear elliptic problems. This will be done
 in the forthcoming works~\cite{CS-P,Mir}.
\end{itemize}

\subsection{Available results}
Let us briefly comment on the main known $L^{\infty}$ bounds for stable solutions to problem \eqref{problem}. 
For more details, we refer to the book~\cite{Dupaigne}
by Dupaigne, and to the recent surveys \cite{CabSurv, CabPog} by the author.

There is a large literature on a priori estimates for stable solutions, beginning with the seminal
work of Crandall and Rabinowitz \cite{CR}. In \cite{CR} and subsequent works,
a standard assumption is that $u$ is positive in $\Omega$ and that the nonlinearity satisfies
\begin{equation} \label{eq1.30}
f(0) > 0, \textrm{ $f$ is nondecreasing, convex, and }\lim_{t\to +\infty}\frac{f(t)}t=\infty.
\end{equation}
This assumption provides with examples of stable solutions $u$ satisfying $u>0$ in $\Omega$. Indeed,
introduce now a parameter $\lambda \geq 0$ and consider the problem
\begin{equation}\label{problemla}
\left\{
\begin{array}{rcll}
-\Delta u &=& \lambda f(u) & \quad\mbox{in $\Omega$}\\
u &=& 0  & \quad\mbox{on $\partial \Omega$.}\\
\end{array}\right.
\end{equation}
Then, assuming that $f\in C^\infty$ satisfies \eqref{eq1.30}, there exists an
extremal parameter $\lambda^*\in (0,+\infty)$ such that if $0<\lambda<\lambda^*$ then
\eqref{problemla} admits a smooth positive stable solution $u_\lambda$.
This family is increasing in $\lambda$ and its limit as 
$\lambda\uparrow\lambda^{*}$ is a stable weak solution $u^*=u_{\lambda^*}$ 
of \eqref{problemla} for $\lambda=\lambda^*$. It is called \emph{the extremal solution} of \eqref{problemla}
and may be bounded or unbounded depending on the dimension, domain, and nonlinearity.
In \cite{BV}, Brezis and V\'azquez  raised several open questions ---see also the open
problems raised by Brezis in \cite{Brezis}--- 
regarding the extremal solution, in particular about its boundedness.
Next we explain the available results on this question. 

The following results from \cite{CR,Sanchon,Nedev} were proven for the {\em smooth} stable 
solutions of \eqref{problemla} corresponding to $\lambda < \lambda^*$.
They led, by letting $\lambda\uparrow\lambda^*$, to the boundedness of the extremal solution
$u^*$ under different hypothesis on $f$, as stated next. The proofs work, however, for any smooth stable solution
to problem \eqref{problem} under the same assumptions on $f$. From now on
in the paper, we adopt this framework and describe
a priori estimates for smooth stable solutions.
In this direction, an interesting question is
whether a stable weak solution of \eqref{problem} can be approximated by smooth stable 
solutions to similar problems ---to which we could apply the available a priori estimates and 
in this way deduce
regularity of the given weak solution. A nice answer was given in \cite{BCMR} (see also Corollary 3.2.1 of 
the monograph~\cite{Dupaigne})
and states that this is always possible if $u\in H^1_0(\Omega)$ is a stable weak solution of \eqref{problem}
and $f$ is nonnegative and convex. The approximating functions can be taken to be smooth stable solutions
to problem \eqref{problem} with $f$ replaced by $(1-\varepsilon)f$, where $\varepsilon\downarrow 0$.

In 1975, Crandall and Rabinowitz \cite{CR} proved an $L^{\infty}(\Omega)$ bound for stable solutions whenever 
$n\leq 9$ and $f(u)=\lambda e^u$ ---as well as for $f(u)=\lambda (1+u)^m$, $m>1$.
This was improved by Sanch\'on~\cite{Sanchon}, who established that $u\in L^\infty(\Omega)$ whenever
$n\leq 9$ and $f\in C^{2}$ satisfies \eqref{eq1.30} as well as that
\begin{equation}\label{tau}
\text{ the limit } \lim_{t \to +\infty} \frac{f(t)f''(t)}{f'(t)^2} \text{ exists.}
\end{equation}

On the other hand, in the radial case $\Omega = B_{1}$
Capella and the author~\cite{CC} proved an $L^\infty$ bound when $n\leq 9$ for every $C^1$ nonlinearity $f$. 

The work of Nedev \cite{Nedev} established the $L^\infty(\Omega)$ bound for
$n=2$ and $3$ when $f$ satisfies \eqref{eq1.30}.
When $2\leq n \leq 4$, in 2010 the author~\cite{Cabre} established that the
$L^{\infty}$ bound holds for every smooth $f$ if in addition $\Omega$ is convex.
Villegas \cite{V} extended this bound for $n=4$ to nonconvex domains
if $f$ is assumed to satisfy \eqref{eq1.30} ---in particular $f$ must be convex. 
He used both the results of \cite{Cabre}
and \cite{Nedev}.

The proof in \cite{Cabre} relied on a quite delicate application of 
the Michael-Simon and Allard Sobolev inequality on every level set of the solution. In addition, 
the proof is very different from that of the radial case up to dimension 9. 
These two points will be overcome with the new proof in the current paper, which we start
describing in next subsection.

When $5\leq n\leq 9$, 
the existence of an $L^\infty$ bound holding for
all smooth nonlinearities was an open question since the mid nineties. It has been very recently
solved by Figalli, Ros-Oton, Serra, and the author~\cite{CFRS}, for nonnegative nonlinearities, 
using different ideas from those of the current article.

Let us address now an important issue: the choice of test function $\xi$ to be used in the stability condition
\eqref{stability} to deduce the estimates. In the works of Crandall-Rabinowitz, Sanch\'on, and Nedev
the test function was 
$$
\xi = h(u)
$$
for a certain, well chosen, function $h$ that depends only on the nonlinearity $f$. 
Instead, in the radial result by Capella and the author, we
chose
\begin{equation*}
\xi=u_r r^{-\beta} \zeta
\end{equation*}
for an appropriate exponent $\beta>0$. Here $r=|x|$ and $\zeta$ is a function identically one except in small a
neighborhood of $\partial\Omega$, where it vanishes ---and also vanishing in a small ball around $\{x=0\}$ 
in order to regularize the singularity of $r^{-\beta}$, though this has no implication in the final computations. 
For the estimate in \cite{Cabre} up to dimension~4 in the nonradial case we used
\begin{equation*}
\xi=\left|\nabla u\right|\varphi(u),
\end{equation*}
with $\varphi$ chosen in a quite subtle way depending on the solution $u$ itself.
In the proof of the current paper we will use instead
\begin{equation*}
\xi(x)=\left|\nabla u(x)\right| |x-y|^{-\beta} \zeta (x),
\end{equation*}
with $y\in\Omega$, $\beta>0$, and $\zeta$ a cut-off function vanishing on $\partial\Omega$.
With $y=0$, in the radial case this test function becomes the radial test function above.
The forthcoming paper by Figalli, Ros-Oton, Serra, and the author~\cite{CFRS} will use still
a different test function to get a new key estimate.

\subsection{New results and description of their proof}

The following is the main result of the article. It establishes an $L^\infty$ a priori estimate for
all nonlinearities and domains in dimensions 3 and 4. After its statement, 
we will comment on the analogous result in dimension~2.

\begin{theorem}\label{Thm1}
Let $f$ be any $C^1$ nonlinearity, $\Omega\subset \mathbb{R}^n$ a smooth bounded domain,
and $u$ a smooth stable solution of equation \eqref{eqsem}.
Assume that $3\leq n\leq 4$.

Then, given any $\delta>0$, we have that
\begin{equation}
\label{est4}
 \Vert u\Vert_{L^\infty(K_\delta)} \leq C \left( \Vert u\Vert_{L^1(\Omega)}
 +\Vert \nabla u\Vert_{L^2(\Omega\setminus K_\delta)} \right),
\end{equation}
where
\begin{equation*}
K_\delta:=\{x\in\Omega : \dist(x,\partial\Omega)\geq\delta\}
\end{equation*}
and $C$ is a constant depending only on $\Omega$ and $\delta$.
In particular, $C$ is independent of~$f$.

If in addition $\Omega$ is convex, $u$ solves problem \eqref{problem}, and $u>0$ in $\Omega$, then
\begin{equation}\label{est5}
 \Vert u\Vert_{L^\infty(\Omega)} \leq C
\end{equation}
for some constant $C$ depending only on $\Omega$, $f$, and $\Vert u\Vert_{L^1(\Omega)}$.

If $\Omega$ is a ball, the same bounds \eqref{est4}-\eqref{est5} 
hold if $2\leq n\leq 9$. Here, the assumption $u>0$ in $\Omega$ is not needed to ensure \eqref{est5}.
\end{theorem}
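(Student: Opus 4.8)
The plan is to establish the interior estimate \eqref{est4} first and deduce \eqref{est5} from it: on a convex domain one combines \eqref{est4} with the classical boundary a priori estimates for positive solutions obtained by the moving plane method (as in \cite{Cabre}), which reduce a global bound to an interior one; on a ball this boundary step uses only the radial structure and local elliptic theory, so positivity of $u$ is not needed there. The real content is thus \eqref{est4}. To prove it I would distill from the stability condition \eqref{stability} a weighted, Morrey-type bound for $\nabla u$ and then apply Morrey's embedding. Throughout, fix $y\in K_\delta$, a cut-off $\zeta\in C^1_c(\Omega)$ equal to $1$ on a neighbourhood of $K_\delta$ with $\nabla\zeta$ supported in a boundary layer, and an exponent $\beta$ with $\tfrac{n-4}{2}<\beta<\tfrac{n-2}{2}$, to be chosen at the end.

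\emph{First step: the geometric form of the stability inequality.} Insert $\xi=|\nabla u|\,\eta$ into \eqref{stability}. Combining the Bochner formula with $-\Delta u=f(u)$ and the pointwise identity $|\nabla^2u|^2-|\nabla|\nabla u||^2=|A|^2|\nabla u|^2+|\nabla_T|\nabla u||^2$, valid on $\{\nabla u\neq0\}$ ---with $A$ and $\nabla_T$ the second fundamental form and tangential gradient of the level set of $u$ through the point--- one obtains the Sternberg--Zumbrun inequality
\[
\int_{\{\nabla u\neq0\}}\bigl(|A|^2|\nabla u|^2+|\nabla_T|\nabla u||^2\bigr)\,\eta^2\d x\;\le\;\int_\Omega|\nabla u|^2|\nabla\eta|^2\d x .
\]
Take $\eta(x)=|x-y|^{-\beta}\zeta(x)$ ---after regularizing the singularity at $y$, whose contribution vanishes in the limit exactly because $\beta<\tfrac{n-2}{2}$. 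Expanding $|\nabla\eta|^2$, the right-hand side equals $\beta^2J$ plus terms supported where $\nabla\zeta\neq0$, hence bounded by $C(\delta)\|\nabla u\|_{L^2(\Omega\setminus K_\delta)}^2$, where $J:=\int_\Omega|\nabla u|^2|x-y|^{-2\beta-2}\zeta^2\d x$. This yields
\[
\int_\Omega\bigl(|A|^2|\nabla u|^2+|\nabla_T|\nabla u||^2\bigr)|x-y|^{-2\beta}\zeta^2\d x\;\le\;\beta^2J+C(\delta)\|\nabla u\|_{L^2(\Omega\setminus K_\delta)}^2 .
\]

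\emph{Second step: a weighted Hardy inequality on hypersurfaces.} The point now is to reabsorb $\beta^2J$ into the left-hand side above. By the coarea formula each integral there decomposes over the level sets $M_t=\{u=t\}$, and on each $M_t$ what is needed is an inequality of the form
\[
c_n\int_{M_t}\frac{v^2}{|x-y|^2}\d\mathcal H^{n-1}\;\le\;\int_{M_t}|\nabla_Tv|^2\d\mathcal H^{n-1}+\int_{M_t}|A|^2v^2\d\mathcal H^{n-1}+\bigl(\text{boundary-layer terms}\bigr),
\]
applied with a suitable $v$ built from $|\nabla u|$ and the weight. This is a \emph{weighted Hardy inequality for functions defined on hypersurfaces of $\R^n$}, and \emph{proving it with a good enough dimensional constant $c_n$ is the main obstacle}; it replaces the Michael--Simon and Allard Sobolev inequality used in \cite{Cabre}. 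I would establish it by an integration by parts on $M_t$ using $\Delta_{M_t}|x-y|^2=2(n-1)+2(x-y)\cdot\vec H$, which brings in the mean curvature of $M_t$; the curvature terms that arise are then controlled via $|A|^2\ge H^2/(n-1)$ and $|(x-y)^T|\le|x-y|$ ---all equalities when $M_t$ is a sphere centred at $y$--- and it is exactly this gap that forces the dimensional restriction. Matching the $\int|A|^2v^2$ and $\int|\nabla_Tv|^2$ terms with the corresponding terms on the left of the first step's final inequality (and treating the cross terms carefully, which is where the admissible range of $\beta$ gets pinned down), one arrives at $(c_n-\beta^2)J\le C(\delta)\|\nabla u\|_{L^2(\Omega\setminus K_\delta)}^2$, hence a uniform bound on $J$ whenever $\beta^{2}<c_n$.

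\emph{Third step: from the weighted gradient bound to $L^\infty$, and the choice of $\beta$.} Since $|x-y|\le\rho$ on $B_\rho(y)$, the bound on $J$ gives $\int_{B_\rho(y)}|\nabla u|^2\d x\le C(\Omega,\delta)\|\nabla u\|_{L^2(\Omega\setminus K_\delta)}^2\,\rho^{\,2\beta+2}$ for all $y\in K_\delta$ and all small $\rho$. As $\tfrac{n-4}{2}<\beta<\tfrac{n-2}{2}$ forces $2\beta+2\in(n-2,n)$, the Morrey--Campanato embedding gives $u\in C^{0,\,\beta+2-n/2}$ near $K_\delta$ with seminorm $\le C(\Omega,\delta)\|\nabla u\|_{L^2(\Omega\setminus K_\delta)}$, and adding the $L^1$ average of $u$ to pass from oscillation to supremum yields \eqref{est4}. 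It remains only to exhibit an admissible $\beta$, i.e.\ one with $\tfrac{n-4}{2}<\beta<\tfrac{n-2}{2}$ and $\beta^{2}<c_n$. In the radial case ($\Omega=B_1$, $y=0$) the level sets are the spheres $\{|x|=\rho\}$, on which $\nabla_T|\nabla u|\equiv0$ and $|A|^{2}=(n-1)/\rho^{2}=H^{2}/(n-1)$; then the second step needs no Hardy inequality at all, the left-hand side of the first step's final inequality reduces to $(n-1)J$, the stability estimate becomes $(n-1-\beta^2)J\le C(\delta)\|\nabla u\|_{L^2(\Omega\setminus K_\delta)}^2$, and the only constraint is $\beta^{2}<n-1$. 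An admissible $\beta$ then exists exactly when $\tfrac{n-4}{2}<\sqrt{n-1}$, i.e.\ when $n\le9$. In the general case the geometric losses of the second step bring $c_n$ down to a value for which an admissible $\beta$ exists precisely for $n\le4$, the two constraints just failing to overlap at $n=5$; in that borderline dimension one can still bound $J$ for $\beta$ approaching $\tfrac{n-4}{2}$ from below, which places $\nabla u$ in a Morrey space just short of the one yielding $L^\infty$, so that a Morrey--Sobolev embedding gives $u\in L^p$ for every finite $p$ though not $u\in L^\infty$.
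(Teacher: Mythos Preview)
Your outline is correct and follows essentially the same route as the paper: Sternberg--Zumbrun with $\eta=|x-y|^{-\beta}\zeta$, a geometric Hardy inequality on the level sets proved by integrating by parts from $\delta_i x_i=n-1$ (your $\Delta_{M_t}|x-y|^2$ identity), absorption to obtain the weighted bound $\int_\Omega|\nabla u|^2|x-y|^{-\alpha}\,dx\le C\|\nabla u\|_{L^2(\Omega\setminus K_\delta)}^2$, and then a potential/Morrey argument to reach $L^\infty$; the boundary step via moving planes (and the radial reduction) is also the same. The one point where your write-up is looser than the paper is the claim that the Hardy step produces a fixed dimensional constant $c_n$: in fact the constant depends on the weight exponent, and the paper handles this by stating the Hardy inequality already in weighted form (with $r_y^{-\alpha}$ on the left and $r_y^{2-\alpha}$ on the right) and applying it with $\varphi=|\nabla u|^{1/2}\zeta$, which makes the absorption condition read $(\alpha-2)^2/4<(n-1-\alpha)^2/(n-1)$ (equivalently $\beta^2<(n-3-2\beta)^2/(n-1)$) rather than $\beta^2<c_n$; this cleaner bookkeeping is what lets one check the admissible range for $n=3,4$ without fighting the cross terms you allude to.
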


Estimate \eqref{est5}, which will follow from \eqref{est4}, was first established in \cite{Cabre} by the author.
In that paper, however, it followed from the estimate
\begin{equation}\label{dim4}
 \Vert u\Vert_{L^\infty(\Omega)} \leq t + \frac{C}{t}|\Omega|^{(4-n)/(2n)}
\left( \int_{\{u <t\}} |\nabla u|^4 \ dx\right)^{1/2} \quad \text{ for every }t>0
\end{equation}
---instead of from \eqref{est4}. In \eqref{dim4}, $C$ is a universal constant, in particular independent of 
$f$ and $\Omega$. 

For the application to the global estimate \eqref{est5} in convex domains, it is essential that the $L^2$
and $L^4$ norms of $\nabla u$ in \eqref{est4} and \eqref{dim4}, respectively, are computed in
a small neighborhood of $\partial\Omega$ ---which is the case if $\delta$ and $t$, respectively, 
are small enough.

In the nonradial case, \eqref{est4} is the first interior $L^\infty$ estimate for stable solutions of
equation \eqref{eqsem} which holds independently of their boundary values.

In \cite{Cabre}, \eqref{dim4} and \eqref{est5} were proven also for $n=2$, as a rather simple 
consequence of Proposition~\ref{geomstab} below and the Gauss-Bonnet formula. 
Instead, the proof in the current paper only gives an estimate for $n=2$ in the radial case.
However, it is immediate to see that if $u=u(x_1,x_2)$ is a stable solution in a domain $\Omega$ of $\R^2$, 
then the function $v(x_1,x_2,x_3):=u(x_1,x_2)$ is also a stable solution of the same equation in any
domain of $\R^3$ contained in $\Omega\times\R$. As a consequence, from estimate \eqref{est4} in dimension 3, 
we deduce the same bound \eqref{est4} in dimension~2 but with its right hand side replaced by 
$C \Vert u\Vert_{H^1(\Omega)}$.

Theorem \ref{Thm1} will follow easily from the following key estimate.
It is a bound on a weighted Dirichlet integral
for stable solutions in every dimension.

\begin{proposition}
 \label{weightest}
 Let $f$ be any $C^1$ nonlinearity, $\Omega\subset \mathbb{R}^n$ a smooth bounded domain,
and $u$ a smooth stable solution of equation \eqref{eqsem}.

Let $\alpha$ satisfy
\begin{equation}
\label{condalpha}
0\leq \alpha < n-1\quad\text{ and }\quad (\alpha -2)^2/4 < (n-1-\alpha)^2/(n-1).
\end{equation}
Then, for all $\delta>0$ and $y\in K_\delta:=\{x\in\Omega : \dist(x,\partial\Omega)\geq\delta\}$, we have that
\begin{equation}\label{estgrad}
 \int_{\Omega}  |\nabla u(x)|^2 |x-y|^{-\alpha} \,dx  \leq C \Vert \nabla u\Vert^2_{L^2(\Omega\setminus K_\delta)}
\end{equation}
for some constant $C$ depending only on $\Omega$, $\delta$, and $\alpha$.

Finally, if $\Omega$ is a ball then \eqref{estgrad} holds with $y=0$ if instead of 
\eqref{condalpha} we assume
\begin{equation}
\label{condalpharad}
0\leq \alpha < n-1\quad\text{ and }\quad (\alpha -2)^2/4 < n-1.
\end{equation}
\end{proposition}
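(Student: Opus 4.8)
The plan is to combine the \emph{geometric} (second--variation) form of the stability inequality with a new weighted Hardy inequality on the level sets of $u$. First I would record the geometric stability inequality, that is, Proposition~\ref{geomstab}: if $u$ is a smooth stable solution of \eqref{eqsem} and $\varphi$ is Lipschitz with $\varphi_{|\partial\Omega}\equiv 0$, then
\begin{equation*}
\int_{\Omega\cap\{\nabla u\neq 0\}}\bigl(|A|^2|\nabla u|^2+|\nabla_T|\nabla u||^2\bigr)\,\varphi^2\,dx\ \le\ \int_{\Omega}|\nabla u|^2\,|\nabla\varphi|^2\,dx,
\end{equation*}
where $A$ and $\nabla_T$ are the second fundamental form and the tangential gradient of the level set $\{u=u(x)\}$ at $x$. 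I would set $\beta:=(\alpha-2)/2$ (so $2\beta+2=\alpha$ and $2-\alpha=-2\beta$) and take $\varphi(x)=|x-y|^{-\beta}\zeta_\varepsilon(x)$, with $\zeta_\varepsilon\equiv 1$ on $K_{\delta/2}\setminus B_\varepsilon(y)$, $\zeta_\varepsilon\equiv 0$ on $B_{\varepsilon/2}(y)$ and outside $K_{\delta/4}$, and $|\nabla\zeta_\varepsilon|\le C(\delta)$ away from $B_\varepsilon(y)$. Since $\dist(y,\partial\Omega)\ge\delta$, one has $|x-y|\ge\delta/2$ on $\operatorname{supp}\nabla\zeta_\varepsilon$ outside $B_\varepsilon(y)$. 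Expanding $|\nabla\varphi|^2$, its leading term is $\beta^2|x-y|^{-\alpha}\zeta_\varepsilon^2$; the remaining terms supported near $\partial\Omega$ are bounded by $C\,\Vert\nabla u\Vert^2_{L^2(\Omega\setminus K_\delta)}$, and those supported near $y$ carry the integrable power $|x-y|^{n-\alpha}$ (here $\alpha<n-1$ is used) and thus tend to $0$ as $\varepsilon\downarrow 0$. This yields
\begin{equation*}
\int_{\Omega\cap\{\nabla u\neq 0\}}\!\bigl(|A|^2|\nabla u|^2+|\nabla_T|\nabla u||^2\bigr)\frac{\zeta_\varepsilon^2}{|x-y|^{2\beta}}\,dx\ \le\ \beta^2\!\int_{\Omega}\frac{|\nabla u|^2\zeta_\varepsilon^2}{|x-y|^{\alpha}}\,dx+C\,\Vert\nabla u\Vert^2_{L^2(\Omega\setminus K_\delta)}+o_\varepsilon(1).
\tag{$\star$}
\end{equation*}

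The key new ingredient is a weighted Hardy inequality on a hypersurface $M\subset\R^n$, with the weight being the distance to a point $y$ that need \emph{not} lie on $M$: for every $\psi\in C^1_c(M)$,
\begin{equation*}
\int_M\bigl(4\,|\nabla_T\psi|^2+|A|^2\psi^2\bigr)|x-y|^{2-\alpha}\,d\sigma\ \ge\ \frac{(n-1-\alpha)^2}{n-1}\int_M\frac{\psi^2}{|x-y|^{\alpha}}\,d\sigma,
\tag{$H$}
\end{equation*}
with the constant improved to $n-1$ when $M$ is a sphere centered at $y$. To prove $(H)$ I would integrate over $M$ the tangential divergence of $\psi^2|x-y|^{-\alpha}(x-y)$, using $\int_M\operatorname{div}_M Y\,d\sigma=\pm\int_M H\langle Y,\nu\rangle\,d\sigma$ for compactly supported $Y$ (with $H,\nu$ the mean curvature and a unit normal of $M$) and the fact that $\nabla_T|x-y|$ is the tangential projection of $(x-y)/|x-y|$. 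Writing $s:=\langle(x-y)/|x-y|,\nu\rangle$, this gives an identity whose left side equals $(n-1)\int_M\psi^2|x-y|^{-\alpha}\,d\sigma-\alpha\int_M(1-s^2)\psi^2|x-y|^{-\alpha}\,d\sigma$, hence is $\ge(n-1-\alpha)\int_M\psi^2|x-y|^{-\alpha}\,d\sigma$, and whose right side involves only the ``tangential'' term $\int_M\psi\,\nabla_T\psi\cdot(x-y)|x-y|^{-\alpha}\,d\sigma$ and the ``normal'' term $\int_M H\psi^2\langle x-y,\nu\rangle|x-y|^{-\alpha}\,d\sigma$. Bounding the right side by a single Cauchy--Schwarz ---using $|\nabla_T\psi\cdot(x-y)|\le|\nabla_T\psi|\,|x-y|\sqrt{1-s^2}$, $|\langle x-y,\nu\rangle|=|x-y|\,|s|$, the pointwise inequality $H^2\le(n-1)|A|^2$, and $s^2+(1-s^2)=1$--- gives $4\int_M|\nabla_T\psi|^2|x-y|^{2-\alpha}\,d\sigma+(n-1)\int_M|A|^2\psi^2|x-y|^{2-\alpha}\,d\sigma\ge(n-1-\alpha)^2\int_M\psi^2|x-y|^{-\alpha}\,d\sigma$, which is $(H)$ after dividing by $n-1$ (here $n\ge 2$). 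When $M=\partial B_r(y)$ one has $|A|^2|x-y|^2\equiv n-1$, so the left side of $(H)$ is at least $\int_M|A|^2\psi^2|x-y|^{2-\alpha}\,d\sigma=(n-1)\int_M\psi^2|x-y|^{-\alpha}\,d\sigma$; this accounts for the larger range \eqref{condalpharad} in the radial case.

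Finally, I would feed $(H)$ into $(\star)$ via the coarea formula. Writing $w:=|\nabla u|$ and $dx=w^{-1}\,d\sigma\,dt$ on $\{\nabla u\neq 0\}$, the left side of $(\star)$ becomes $\int_{\R}\!\int_{\{u=t\}}\bigl(|A|^2 w+|\nabla_T w|^2/w\bigr)\zeta_\varepsilon^2|x-y|^{2-\alpha}\,d\sigma\,dt$. For a.e.\ $t$ (Sard's theorem) the level set $\{u=t\}$ is a smooth hypersurface on which $w>0$; applying $(H)$ with $\psi=\sqrt{w}\,\zeta_\varepsilon$ ---which is compactly supported in $\{u=t\}$ because $\zeta_\varepsilon$ vanishes outside $K_{\delta/4}$--- and using $4|\nabla_T\sqrt{w}|^2=|\nabla_T w|^2/w$, one bounds this quantity below by $\frac{(n-1-\alpha)^2}{n-1}\int_{\Omega}|\nabla u|^2\zeta_\varepsilon^2|x-y|^{-\alpha}\,dx$ (with constant $n-1$ in the radial case), up to error terms collecting the contributions of $\nabla_T\zeta_\varepsilon$. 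These last terms are supported near $\partial\Omega$ (plus a piece near $y$ that is $o_\varepsilon(1)$): the one quadratic in $\nabla_T\zeta_\varepsilon$ is $\le C(\Omega,\delta,\alpha)\Vert\nabla u\Vert^2_{L^2(\Omega\setminus K_\delta)}$, while the mixed one is handled by Cauchy--Schwarz against $\int_{\Omega}\zeta_\varepsilon^2|\nabla_T w|^2|x-y|^{2-\alpha}\,dx$, which is itself $\le$ the left-hand side of $(\star)$, and against $C(\Omega,\delta,\alpha)\Vert\nabla u\Vert^2_{L^2(\Omega\setminus K_\delta)}$. Inserting all this into $(\star)$ and using condition \eqref{condalpha}, which is exactly $\beta^2=(\alpha-2)^2/4<(n-1-\alpha)^2/(n-1)$ (respectively \eqref{condalpharad}, which is $\beta^2<n-1$), a short quadratic argument lets one absorb the $|\nabla u|^2\zeta_\varepsilon^2|x-y|^{-\alpha}$ integrals coming from the right, leaving $\int_{\Omega}|\nabla u|^2\zeta_\varepsilon^2|x-y|^{-\alpha}\,dx\le C(\Omega,\delta,\alpha)\Vert\nabla u\Vert^2_{L^2(\Omega\setminus K_\delta)}+o_\varepsilon(1)$. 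Letting $\varepsilon\downarrow 0$ and adding the harmless integral over $\Omega\setminus K_{\delta/2}$ (where $|x-y|\ge\delta/2$) gives \eqref{estgrad}.

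I expect the main obstacle to be the proof of the Hardy inequality $(H)$ with the sharp constant: one must organize the cross terms of the divergence identity ---the tangential one and the normal one carrying $\langle x-y,\nu\rangle$ and $H$--- so that a \emph{single} Cauchy--Schwarz, combined with $H^2\le(n-1)|A|^2$ and $|\nabla_T|x-y||^2+s^2=1$, returns exactly $(n-1-\alpha)^2/(n-1)$ (and $n-1$ for spheres centered at $y$), and to check that this is compatible with the subsequent absorption. The remaining points ---the behaviour of $\zeta_\varepsilon$ near $y$ and near $\partial\Omega$, Sard's theorem and the critical set $\{\nabla u=0\}$, and the limit $\varepsilon\downarrow 0$--- are technical but routine.
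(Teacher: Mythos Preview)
Your proposal is correct and follows essentially the same strategy as the paper: both combine the Sternberg--Zumbrun inequality (test function $r_y^{(2-\alpha)/2}\zeta$) with the geometric Hardy inequality on the level sets (test function $|\nabla u|^{1/2}\zeta$), linked through $H^2\le(n-1)|A|^2$, and the error terms are handled in the same way. The only differences are cosmetic ---you apply Sternberg--Zumbrun first and Hardy second while the paper reverses the order, and you absorb $H^2\le(n-1)|A|^2$ into the statement of $(H)$ rather than applying it afterward--- and in the radial case you invoke $|A|^2r^2=n-1$ on spheres directly whereas the paper keeps the extra term $\alpha I_r$ from \eqref{hardyfol} and uses $I_r=I$; one point you leave implicit is that for $\Omega$ a ball the level sets are spheres centered at $y=0$ only because stability forces $u$ to be radial (Proposition~1.3.4 of \cite{Dupaigne}).
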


This result will easily give, in dimension 5, an {\em interior} $L^p$ bound for all finite exponent $p$.
To have an $L^p$ bound up to the boundary we need to assume $\Omega$ to be convex, as stated in the next result.

\begin{corollary}\label{Corol5}
Assume that $n=5$. Let $f$ be any $C^1$ nonlinearity, $\Omega\subset \mathbb{R}^5$ a convex smooth bounded domain,
and $u>0$ a smooth stable solution of problem \eqref{problem}.

Then,
\begin{equation}
\label{estcor5}
 \Vert u\Vert_{L^p(\Omega)} \leq C_p \quad\text{ for every } 1<p<\infty,
 \end{equation}
where $C_p$ is a constant depending only on $p$, $\Omega$, $f$, and $\Vert u\Vert_{L^1(\Omega)}$.
\end{corollary}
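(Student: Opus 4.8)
The plan is to derive an interior $L^p$ bound from Proposition~\ref{weightest} for a well-chosen $\alpha$, then upgrade it to a global bound using the convexity of $\Omega$ via a moving-planes / Hopf-type argument near the boundary, exactly as one passes from \eqref{est4} to \eqref{est5} in Theorem~\ref{Thm1}. First I would check that in dimension $n=5$ the range of admissible $\alpha$ in \eqref{condalpha} includes all $\alpha$ with $0\le\alpha<4$ sufficiently close to $4$: the second condition reads $(\alpha-2)^2/4<(4-\alpha)^2/4$, i.e. $(\alpha-2)^2<(4-\alpha)^2$, which holds precisely for $\alpha<3$. So \eqref{estgrad} is available for every $\alpha\in[0,3)$ (with a constant blowing up as $\alpha\uparrow3$), uniformly for $y$ ranging in $K_\delta$.

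Next, from $\int_\Omega|\nabla u|^2|x-y|^{-\alpha}\d x\le C$ with $C$ independent of $y\in K_\delta$, I would obtain a pointwise bound on $u$ in $K_{2\delta}$ by a potential-theoretic estimate: since $-\Delta u=f(u)\ge0$ (here I use $f\ge0$, which follows from $u>0$, $u=0$ on $\partial\Omega$, and the maximum principle forcing $-\Delta u\ge0$ somewhere — more precisely $f(u)\ge0$ is not automatic, so instead I would use the representation $u(y)=\int_\Omega G_\Omega(x,y)f(u(x))\d x$ together with the known fact from \cite{Nedev}-type arguments that $f(u)\in L^1$ with norm controlled by $\Vert u\Vert_{L^1}$ and $f$; combined with $\nabla u\in L^2_{|x-y|^{-\alpha}}$ one controls $u(y)$). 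More directly: for $y\in K_{2\delta}$ write $u(y)-\fint_{B_\delta(y)}u$ as an integral of $\nabla u$ against the Newtonian-type kernel $|x-y|^{1-n}=|x-y|^{-4}$ on $B_\delta(y)$, and apply Cauchy–Schwarz with the weight $|x-y|^{-\alpha}$: one needs $\int_{B_\delta(y)}|x-y|^{2(1-n)+\alpha}\d x=\int_{B_\delta}|x|^{-8+\alpha}\d x<\infty$, i.e. $-8+\alpha+n-1=\alpha-4>-1$, i.e. $\alpha>3$ — which is exactly the endpoint we cannot reach. So a single application of \eqref{estgrad} does not give $L^\infty$, only an $L^p$ bound: interpolating, for $\alpha<3$ one gets $\nabla u\in L^2(\Omega;|x-y|^{-\alpha})$ uniformly in $y$, and by a standard lemma (Morrey/Adams-type, or iterating the Riesz-potential estimate) this yields $u\in L^p_{\mathrm{loc}}$ for every $p<\infty$, with the exponent $p$ allowed to tend to $\infty$ as $\alpha\uparrow 3$. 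This produces \eqref{estcor5} with $K_\delta$ in place of $\Omega$.

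Finally I would remove the restriction to $K_\delta$ using convexity. By the boundary estimates for stable solutions in convex domains (the moving-planes argument of \cite{Cabre}, which gives $\Vert\nabla u\Vert_{L^\infty(\Omega\setminus K_\delta)}\le C(\Omega,f,\Vert u\Vert_{L^1})$ once $\delta$ is small, together with $u=0$ on $\partial\Omega$), the values of $u$ on the collar $\Omega\setminus K_\delta$ are already bounded, hence trivially in $L^p$. Combining the interior $L^p$ bound on $K_\delta$ with the $L^\infty$ bound on the collar gives \eqref{estcor5} on all of $\Omega$. The main obstacle is the borderline nature of the exponent: Proposition~\ref{weightest} is exactly one power of $|x-y|$ short of delivering $L^\infty$ in dimension $5$, so the argument must be run as a family of estimates indexed by $\alpha\uparrow 3$ with constants $C_p$ degenerating as $p\to\infty$, and one must be careful that the passage from the weighted Dirichlet bound to an $L^p$ bound (via Riesz potentials or a Moser-type iteration) is uniform in $y$ and uses only $f\ge0$ through the already-cited $L^1$ bound on $f(u)$.
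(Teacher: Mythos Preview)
Your proposal is correct and follows essentially the same route as the paper: check that \eqref{condalpha} in dimension $5$ allows every $\alpha<3$, use \eqref{estgrad} together with the convex-boundary argument to get a uniform weighted Dirichlet bound, pass from this to $u\in L^p$ for every finite $p$ via a Morrey/Adams or Riesz-potential argument, and observe that the failure at $\alpha=3$ is exactly what prevents $L^\infty$. The paper makes the middle step explicit by phrasing \eqref{estgrad} as $|\nabla u|\in M^{2,\lambda}(\mathbb R^5)$ for $\lambda=5-\alpha\in(2,3)$ (after extending by zero) and then invoking Adams' embedding $M^{2,\lambda}\ni|\nabla u|\Rightarrow u\in M^{2\lambda/(\lambda-2),\lambda}\subset L^{2\lambda/(\lambda-2)}$, letting $\lambda\downarrow 2$; it also remarks that the same conclusion follows more elementarily from \eqref{repres} and a Young-type argument, which is your ``Riesz-potential'' alternative.

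One clarification: drop the detour about $f\ge 0$ and $f(u)\in L^1$. No sign condition on $f$ enters the interior argument---the representation \eqref{repres} uses only $\nabla u$, not $f(u)$---and the only place positivity is used is the moving-planes boundary estimate, which requires $u>0$ (already assumed) rather than $f\ge 0$.
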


Our key estimate in Proposition \ref{weightest} will follow from two tools:
\begin{itemize}
 \item the bound \eqref{semi1} for stable solutions of Sternberg-Zumbrun, explained below, and
 \item a new geometric Hardy inequality on hypersurfaces of $\R^n$, \eqref{hardy_ibp_p2} below, having universal
 constants ---as in the Michael-Simon and Allard Sobolev inequality. 
 Its proof will be simpler than the one of the Michael-Simon and Allard Sobolev inequality. Once it
 is applied to every level set of an arbitrary function~$u$, it becomes Theorem~\ref{hardyfolprop} below.
\end{itemize}

To describe these two tools, we need to introduce some geometric objects.
First recall that since $u$ is smooth, by Sard's theorem, almost every $t\in\R$ is a regular value of 
$u$. By definition, if $t$ is a regular value of $u$, then 
$\left|\nabla u(x)\right|>0$ for all $x\in\Omega$ such that 
$u(x)=t$. In particular, if $t$ is a regular value, $\{u=t\}$ is (if not empty)
a smooth embedded hypersurface of $\mathbb{R}^n$.

In the set of regular points of $u$, $\left\{x\in\Omega :  |\nabla u(x)|>0\right\}$, we consider the 
normal vector to the level sets of $u$,
\begin{equation}\label{gradnormal}
\nu= \nabla u / |\nabla u|,
\end{equation}
as well as the tangential gradient along the level sets. That is,
for a $C^1(\Omega)$ function $\varphi$ we consider 
\begin{equation}\label{gradtang}
\nabla_T \varphi := \nabla \varphi - \left \langle \nabla \varphi, \nu \right \rangle \nu
\end{equation}
---the projection of the full 
gradient of $\varphi$ at $x$ onto the tangent space to the level set of $u$ passing through $x$.
We will also encounter the square of the second fundamental form of the level sets,
$$
|A|^2=|A(x)|^2=\sum_{l=1}^{n-1} \kappa_l^2,
$$
where $\kappa_l$ are the principal curvatures of the level set of $u$ passing
through $x$. In other results, it will be of importance the mean curvature of the level
sets, defined as
\begin{equation}\label{defH}
H=H(x)=\sum_{l=1}^{n-1} \kappa_l.
\end{equation}

The level sets of a solution $u$, and their curvatures, appeared in the following 
important result of Sternberg and Zumbrun \cite{SZ1,SZ2}. It is an inequality 
that follows from the stability hypothesis \eqref{stability} together with the identity
\begin{equation}\label{eq:grad}
\left(\Delta+f'(u)\right)\left|\nabla u\right|=
\frac{1}{\left|\nabla u\right|} 
\left(\left|\nabla_T\left|\nabla u\right|\right|^2+
\left|A\right|^2\left|\nabla u\right|^2\right)\ \text{in}\ 
\Omega\cap\left\{\left|\nabla u\right|>0\right\}.
\end{equation}
By taking $\xi=\left|\nabla u\right|\eta$ in the stability hypothesis \eqref{stability}, the presence of 
$f'(u)$ in \eqref{stability} disappears, since the left hand 
side of \eqref{eq:grad} refers to $\Delta+f'(u)$. One then obtains the following bound for stable solutions. 
See also \cite{Cabre} for a proof of the proposition.

\begin{proposition}[Sternberg-Zumbrun \cite{SZ1,SZ2}]\label{geomstab} 
Let $f$ be any $C^1$ nonlinearity, $\qquad\Omega\subset \mathbb{R}^n$ a smooth bounded domain,
and $u$ a smooth stable solution of equation \eqref{eqsem}.

Then, 
\begin{equation}\label{semi1}
\int_{\Omega\cap\{|\nabla u|>0\}} 
\left( |\nabla_T |\nabla u||^2 +|A|^2|\nabla u|^2\right)\eta^2 \,dx
\leq \int_\Omega |\nabla u|^2 |\nabla \eta|^2 \,dx
\end{equation}
for every $C^1(\overline\Omega)$ function 
$\eta$ with $\eta|_{\partial\Omega}\equiv0$.
\end{proposition}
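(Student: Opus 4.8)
The plan is to insert the test function $\xi=|\nabla u|\,\eta$ into the stability inequality \eqref{stability} and to match the outcome with the pointwise identity \eqref{eq:grad} integrated against $|\nabla u|\,\eta^2$. The weight $|\nabla u|$ is chosen precisely so that the zeroth order term $f'(u)$ occurring in \eqref{stability} cancels with the one produced by the operator $\Delta+f'(u)$ on the left-hand side of \eqref{eq:grad}, leaving only the curvature quantities $|\nabla_T|\nabla u||^2$ and $|A|^2|\nabla u|^2$.

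Concretely, I would first multiply \eqref{eq:grad} by $|\nabla u|\,\eta^2$, integrate over $\Omega\cap\{|\nabla u|>0\}$, and integrate by parts the term $|\nabla u|\,\eta^2\,\Delta|\nabla u|$, which gives
\begin{equation*}
\int_{\Omega\cap\{|\nabla u|>0\}}\big(|\nabla_T|\nabla u||^2+|A|^2|\nabla u|^2\big)\eta^2\,dx
=\int_\Omega f'(u)|\nabla u|^2\eta^2\,dx-\int_\Omega|\nabla|\nabla u||^2\eta^2\,dx-\int_\Omega 2|\nabla u|\,\eta\,\nabla|\nabla u|\cdot\nabla\eta\,dx .
\end{equation*}
On the other hand, expanding the square and applying \eqref{stability} with $\xi=|\nabla u|\,\eta$ yields
\begin{equation*}
\int_\Omega f'(u)|\nabla u|^2\eta^2\,dx\le\int_\Omega|\nabla(|\nabla u|\,\eta)|^2\,dx
=\int_\Omega|\nabla|\nabla u||^2\eta^2\,dx+\int_\Omega 2|\nabla u|\,\eta\,\nabla|\nabla u|\cdot\nabla\eta\,dx+\int_\Omega|\nabla u|^2|\nabla\eta|^2\,dx .
\end{equation*}
Using the first relation to substitute for $\int_\Omega f'(u)|\nabla u|^2\eta^2\,dx$ in the second, the two integrals involving $\nabla|\nabla u|$ cancel and one is left exactly with \eqref{semi1}.

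The one step that needs care, and the main obstacle, is the legitimacy of this integration by parts and of the choice of test function, since $|\nabla u|$ is merely Lipschitz across the critical set $\{\nabla u=0\}$. I would resolve it by the standard regularization: replace $|\nabla u|$ throughout by $v_\varepsilon:=\sqrt{\varepsilon^2+|\nabla u|^2}$, which for $\varepsilon>0$ is smooth and strictly positive on $\overline\Omega$ because $u\in C^\infty(\overline\Omega)$, so that $v_\varepsilon\eta$ is an admissible $C^1(\overline\Omega)$ test function and all integrations by parts take place on $\Omega$ with $\eta=0$ on $\partial\Omega$. The analogue of \eqref{eq:grad} is then the smooth identity $v_\varepsilon\big(\Delta+f'(u)\big)v_\varepsilon=|\nabla^2 u|^2-|\nabla v_\varepsilon|^2+\varepsilon^2 f'(u)$, obtained by differentiating $-\Delta u=f(u)$ and using Bochner's formula. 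Repeating the computation above and then letting $\varepsilon\downarrow0$ gives \eqref{semi1}: indeed $|\nabla v_\varepsilon|^2=|\nabla u\cdot\nabla^2 u|^2/(\varepsilon^2+|\nabla u|^2)$ is bounded by $|\nabla^2 u|^2$ and converges pointwise to $|\nabla|\nabla u||^2$ on $\{\nabla u\neq0\}$ and to $0$ on $\{\nabla u=0\}$, so dominated convergence applies; $f'(u)$ is bounded on $\overline\Omega$, so the $\varepsilon^2$ term disappears; the contribution of $\{\nabla u=0\}$ to $\int_\Omega|\nabla^2 u|^2\eta^2$ is nonnegative and may be discarded; and on $\{|\nabla u|>0\}$ one has the geometric decomposition $|\nabla^2 u|^2-|\nabla|\nabla u||^2=|\nabla_T|\nabla u||^2+|A|^2|\nabla u|^2$, which is exactly \eqref{eq:grad} combined with the Bochner identity. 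Apart from this passage to the limit, everything is bookkeeping.
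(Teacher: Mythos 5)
Your proof is correct and follows exactly the route the paper indicates (insert $\xi=|\nabla u|\,\eta$ into the stability inequality, combine with the Sternberg--Zumbrun identity \eqref{eq:grad} so the $f'(u)$ terms cancel); the paper itself only sketches this and refers to \cite{Cabre} for details. Your regularization $v_\varepsilon=\sqrt{\varepsilon^2+|\nabla u|^2}$ and the accompanying Bochner identity handle the critical set $\{\nabla u=0\}$ cleanly, and the passage to the limit by dominated convergence is correctly justified.
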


In \cite{Cabre}, this result was used choosing $\eta= \varphi (u)$, applying then the coarea formula,
and estimating by below the left hand side of \eqref{semi1} through the Michael-Simon and Allard Sobolev 
inequality applied on every level set of $u$. Recall that this is a remarkable
Sobolev inequality for functions defined in general hypersurfaces of~$\R^n$. The constants
appearing in the inequality are universal. This is accomplished by adding to the usual
Dirichlet norm on the hypersurface, an additional $L^2$ norm weighted with the mean curvature
of the hypersurface. 

Instead, in this paper we bound by below the left hand side of \eqref{semi1} through a simpler tool: 
a new Hardy inequality on hypersurfaces, also with universal constants and an additional term 
involving the mean curvature. 
In fact, what we need is the following geometric Hardy inequality on the foliation of hypersurfaces given by the
level sets of a function $u$. To state it, given $y\in\R^n$ we introduce the functions (of~$x$)
$$
r_y= r_y(x)= |x-y|
$$
and
$$
u_{r_y}=u_{r_y}(x)=\nabla u(x)\cdot \frac{x-y}{|x-y|}.
$$

The following is the new geometric Hardy inequality. Recall \eqref{gradnormal}-\eqref{defH}
for the meaning of $\nabla_T$ and $H$ (which refer to the level sets of $u$) in the statement. 
As we will explain later, our inequalities have the best constants in the flat case, i.e., 
when the level sets of $u$ are hyperplanes.

\begin{theorem}
 \label{hardyfolprop}
Let $\Omega\subset \mathbb{R}^n$ be a smooth bounded domain, 
$u$ any $C^\infty(\overline\Omega)$ function, $\varphi\in C^1(\overline\Omega)$, 
$\alpha\in [0,n-1)$, and $y\in\R^n$. Assume that either $u|_{\partial\Omega}\equiv 0$ or
$\varphi|_{\partial\Omega}\equiv 0$.

Then,  we have
 \begin{equation}
\begin{split}
  \label{hardyfol}
 &\hspace{-3mm}(n-1-\alpha) \int_\Omega |\nabla u| \varphi^2 r_y^{-\alpha} \,dx
 + \alpha \int_{\Omega} \frac{u_{r_y}^2}{|\nabla u|} \varphi^2 r_y^{-\alpha} \,dx\\
  &\hspace{3mm}\leq   \left( \int_\Omega |\nabla u| \varphi^2 r_y^{-\alpha}\,dx \right)^{1/2}
  \left( \int_{\Omega\cap\{|\nabla u|>0\}} |\nabla u| \left( 4|\nabla_T\varphi|^2 + H^2\varphi^2\right) r_y^{2-\alpha} \,dx \right)^{1/2}.
 \end{split}
 \end{equation}
 In particular,
 \begin{equation}
  \label{hardymanif}
(n-1-\alpha)^2 \int_\Omega |\nabla u| \varphi^2 r_y^{-\alpha} \,dx
 \leq  \int_{\Omega\cap\{|\nabla u|>0\}} |\nabla u| \left( 4|\nabla_T\varphi|^2 + H^2 \varphi^2\right) r_y^{2-\alpha}\,dx .
 \end{equation}
 
In addition, if $u=u(r_y)$ is radially symmetric about the point $y$, then 
\begin{equation}\label{hardyrad}
(n-1)^2 \int_{\Omega}  |u_{r_y}| \varphi^2 r_y^{-\alpha} \,dx 
\leq \int_{\Omega\cap\{|\nabla u|>0\}}  |u_{r_y}|   \big (4 |\nabla_T \varphi |^2 + {H}^2\varphi^2 \big )r_y^{2-\alpha}  \,dx.
\end{equation}
\end{theorem}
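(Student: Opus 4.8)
The plan is to establish the differential inequality \eqref{hardyfol} first, since \eqref{hardymanif} follows from it by Cauchy--Schwarz (canceling one factor of $\left(\int_\Omega |\nabla u|\varphi^2 r_y^{-\alpha}\,dx\right)^{1/2}$) and \eqref{hardyrad} is the special case where $|\nabla u| = |u_{r_y}|$ and $\nabla u$ is everywhere parallel to $\nu = (x-y)/|x-y|$, so that $u_{r_y}^2/|\nabla u| = |\nabla u| = |u_{r_y}|$ and the two left-hand terms combine with coefficient $(n-1-\alpha)+\alpha = n-1$, then squaring. So the heart of the matter is \eqref{hardyfol}. The natural starting point is an integration by parts: I would compute $\diverg$ of a well-chosen vector field built from $x-y$, the normal $\nu$, and the weight $r_y^{-\alpha}$, integrate over $\Omega$, and use that one of $u$ or $\varphi$ vanishes on $\partial\Omega$ to kill the boundary term. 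The relevant identity is the tangential (or surface) divergence theorem on level sets, equivalently the coarea formula combined with integration by parts on each hypersurface $\{u=t\}$; the mean curvature $H$ enters precisely through $\diverg_T \nu = H$ (with the sign convention making $H>0$ for spheres with inward normal, matching the radial statement).

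Concretely, I would apply the tangential divergence theorem to the vector field $V = \varphi^2 r_y^{-\alpha}\, (x-y)^T$ on each level set, where $(x-y)^T = (x-y) - \langle x-y,\nu\rangle\nu$ is the tangential part of the position vector. The tangential divergence of $(x-y)^T$ on an $(n-1)$-dimensional hypersurface is $(n-1) - \langle x-y,\nu\rangle H$ minus a term accounting for how $(x-y)^T$ fails to be the true position field; carefully, $\diverg_T\big((x-y) - \langle x-y,\nu\rangle\nu\big) = (n-1) + H\langle x-y,\nu\rangle$ after using $\nabla_T\langle x-y,\nu\rangle$ and $\diverg_T\nu = H$. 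Differentiating the scalar factor $\varphi^2 r_y^{-\alpha}$ tangentially produces $2\varphi\,\nabla_T\varphi\cdot(x-y)^T\, r_y^{-\alpha}$ and $-\alpha\,\varphi^2 r_y^{-\alpha-2}\,(x-y)\cdot(x-y)^T = -\alpha\,\varphi^2 r_y^{-\alpha-2}\,|(x-y)^T|^2$. Using $|(x-y)^T|^2 = r_y^2 - \langle x-y,\nu\rangle^2$ and, after multiplying through by $|\nabla u|$ and integrating via coarea, recognizing that $\langle x-y,\nu\rangle |\nabla u| = \langle x-y,\nabla u\rangle = r_y\, u_{r_y}$, the $\langle x-y,\nu\rangle^2$ contribution turns into $+\alpha\, u_{r_y}^2/|\nabla u|$, which is exactly the second term on the left of \eqref{hardyfol}. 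Collecting: $(n-1-\alpha)\int |\nabla u|\varphi^2 r_y^{-\alpha} + \alpha\int u_{r_y}^2\varphi^2 r_y^{-\alpha}/|\nabla u|$ equals $-\int |\nabla u|\big(2\varphi\,\nabla_T\varphi\cdot(x-y)^T + H\,\varphi^2\langle x-y,\nu\rangle\big) r_y^{-\alpha}\,dx$ plus the harmless boundary term. The right side is then estimated by $\int |\nabla u|\,\varphi\, r_y^{-\alpha}\big(2|\nabla_T\varphi| + |H\varphi|\big)\,r_y\,dx$, using $|(x-y)^T|\le r_y$ and $|\langle x-y,\nu\rangle|\le r_y$, followed by $2ab \le a^2 + b^2$ in the form $\big(2|\nabla_T\varphi| + |H\varphi|\big)^2 \le$ ... no: rather write $\varphi(2|\nabla_T\varphi|+|H\varphi|)r_y \le \big(\varphi^2\big)^{1/2}\big(4|\nabla_T\varphi|^2+H^2\varphi^2)^{1/2} r_y$ is \emph{not} quite an identity, so more precisely bound $2|\nabla_T\varphi|\cdot|(x-y)^T| + |H\varphi\langle x-y,\nu\rangle| \le (4|\nabla_T\varphi|^2 + H^2\varphi^2)^{1/2}\,(|(x-y)^T|^2 + \langle x-y,\nu\rangle^2)^{1/2} = (4|\nabla_T\varphi|^2 + H^2\varphi^2)^{1/2}\, r_y$ by Cauchy--Schwarz in $\mathbb{R}^2$; then apply Cauchy--Schwarz in the integral with weights $|\nabla u| r_y^{-\alpha}$ split as $(|\nabla u|\varphi^2 r_y^{-\alpha})^{1/2}\cdot (|\nabla u| r_y^{2-\alpha})^{1/2}$, yielding exactly \eqref{hardyfol}.

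The main obstacle I expect is making the tangential integration by parts rigorous in the presence of critical points of $u$, where $\nu$ is undefined and $\{u=t\}$ need not be smooth. The clean route is to invoke Sard's theorem so that a.e.\ $t$ is a regular value, perform the divergence theorem on the smooth compact hypersurface-with-boundary $\{u=t\}\cap\overline\Omega$ for such $t$, and then integrate in $t$ over $\mathbb{R}$ using the coarea formula $\int_\Omega g|\nabla u|\,dx = \int_{\mathbb{R}}\big(\int_{\{u=t\}} g\,d\mathcal H^{n-1}\big)dt$; the set of critical values has measure zero and the set of critical points, intersected with any level, contributes nothing to the $(n-1)$-dimensional measure for a.e.\ $t$. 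One must also check that the integrals on the right of \eqref{hardyfol}, restricted to $\{|\nabla u|>0\}$ as written, capture all of the left-hand side — which they do because on $\{|\nabla u|=0\}$ the integrand $|\nabla u|\varphi^2 r_y^{-\alpha}$ vanishes (and $r_y^{-\alpha}$ is locally integrable near $y$ precisely because $\alpha < n-1 < n$, though in \eqref{hardyfol} it always appears multiplied by $|\nabla u|$ or, on the right, by $r_y^2$, so integrability is not even an issue there). Finally, the sign of $H$ and the orientation of $\nu$ must be tracked so that the radial reduction \eqref{hardyrad} comes out with the stated constant $(n-1)^2$ and with $+H^2\varphi^2$; choosing $\nu = \nabla u/|\nabla u|$ as in \eqref{gradnormal} fixes this once and for all, and the $H^2$ term is manifestly nonnegative so no sign subtlety survives into the final inequalities \eqref{hardymanif}--\eqref{hardyrad}.
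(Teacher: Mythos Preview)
Your proposal is correct and follows essentially the same route as the paper: tangential integration by parts against the position vector field $(x-y)$ weighted by $\varphi^2 r_y^{-\alpha}$, the identity $\delta_i x_i = n-1$, the computation producing the extra $\alpha\, u_{r_y}^2/|\nabla u|$ term from $|(x-y)^T|^2 = r_y^2 - \langle x-y,\nu\rangle^2$, and then Cauchy--Schwarz using that $\sum_i(2\delta_i\varphi - H\varphi\,\nu^i)^2 = 4|\nabla_T\varphi|^2 + H^2\varphi^2$ (your $\mathbb{R}^2$ Cauchy--Schwarz with the Pythagorean splitting gives exactly the same bound). The only cosmetic differences are that the paper packages the per-level-set divergence theorem into a single weighted integration-by-parts lemma on $\Omega$ (handling the critical set by averaging in $\varepsilon$ rather than invoking Sard plus coarea), and that with the paper's convention $H=\diverg\nu$ one has $\diverg_T(x-y)^T = (n-1) - H\langle x-y,\nu\rangle$ rather than $+$; as you note, only $H^2$ survives, so this is immaterial.
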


To deduce our results on stable solutions, we will apply these inequalities with
$$
\varphi = |\nabla u|^{1/2}\zeta
$$
---appropriately regularized at critical points of $u$ and with $\zeta$ being a cut-off.
The values of the constants in the inequalities of Theorem \ref{hardyfolprop}
will determine the dimensions in which we can 
prove boundedness of stable solutions. In particular, the larger constant in the left hand side of 
\eqref{hardyrad} ---with respect to that in \eqref{hardymanif}--- will allow to reach the optimal dimension 9
in the radial case.

Note that, as in the Michael-Simon and Allard inequality, 
the constants in the previous inequalities are universal and the geometry of the level sets of $u$
only appears through the term involving their mean curvature. In fact, the level sets of $u$ are what
matters in the inequalities, and not $u$ itself. More precisely, given a hypersurface $M$ of $\R^n$, consider 
its parallel hypersurfaces (in a $\varepsilon$-neighborhood of $M$) and the function $u=u_\varepsilon$ to be 
$\varepsilon^{-1}\dist(\cdot,M)$ in the $\varepsilon$-neighborhood of $M$. Letting $\varepsilon\to 0$
in the inequalities \eqref{hardyfol} and \eqref{hardymanif} corresponding to $u=u_\varepsilon$
(and $\alpha=2$ and $y=0$ to simplify), one obtains (thanks to the coarea formula)
the following Hardy inequality for functions defined on a single hypersurface $M\subset \R^n$. For every
$C^1$ function $\varphi$ with compact support in $M$, a hypersurface of $\R^n$, we have
\begin{multline}
\label{hardy_ibp_p2}
\quad\qquad ((n-1)-2)\int_M\frac{\varphi^2}{|x|^2}\,dV+2\int_M\left(\frac{x}{|x|}\cdot\nu\right)^2
\frac{\varphi^2}{|x|^2} \, dV
\\ \leq \left(\int_M\frac{\varphi^2}{|x|^2}\,dV\right)^{1/2}
\left(\int_M\left(4|\nabla_T\varphi|^2+H^2\varphi^2\right)\,dV\right)^{1/2},
\end{multline}
where $H$ is the mean curvature of $M$ and $\nu$ the normal to $M$. In particular,
\begin{equation}
\label{hardy_ibp}
\frac{((n-1)-2)^2}{4}\int_M\frac{\varphi^2}{|x|^2}\,dV\leq 
\int_M\left(|\nabla_T\varphi|^2+\frac{1}{4} H^2\varphi^2\right)\,dV.
\end{equation}

And vice versa, from the inequalities \eqref{hardy_ibp_p2} and \eqref{hardy_ibp} in a (general) 
single hypersurface $M$, we can deduce Theorem~\ref{hardyfolprop}.
For this, we just write the integrals in \eqref{hardyfol} and \eqref{hardymanif} through the coarea formula
and recall Sard's theorem as described above. 
For instance, we have 
$$
\int_{\Omega}  |\nabla u|\frac{\varphi^2}{|x|^2} \,dx =\int_\R \left( \int_{\Omega\cap\{u=t\}} 
\frac{\varphi^2}{|x|^2}\, dV\right) dt.
$$

Carron \cite{Car} had already established a universal Hardy inequality in hypersurfaces of $\R^{n}$.
However, it differs from ours, (\ref{hardy_ibp}), in the mean curvature term. His estimate states that
if $n-1\geq 3 $ and $\varphi$ is a $C^1$ function with compact support in~$M$, then
\begin{equation}\label{hardy_car}
\frac{((n-1)-2)^2}{4}\int_M\frac{\varphi^2}{|x|^2}\,dV \leq 
\int_M \left(|\nabla_T\varphi|^2+ \frac{(n-1)-2}{2}\, \frac{|H|}{|x|}\, \varphi^2 \right)\,dV.
\end{equation}
Note that this inequality and Cauchy-Schwarz lead to one like ours, of the form \eqref{hardy_ibp},
but with worst constants ---and recall that the value of constants are crucial in our applications.
In addition, our inequality \eqref{hardy_ibp_p2} with an extra term in the left hand side will
be needed to reach dimension 9 in the radial case.

Note also that, if $ M=\R^{n-1}$, both \eqref{hardy_ibp} and \eqref{hardy_car} become the classical
Hardy inequality in $\R^{n-1}$ with its best constant.

Further geometric Hardy-Sobolev inequalities in hypersurfaces of $\R^n$ will be studied in
collaboration with P. Miraglio in~\cite{CabMir}.

\section{The geometric Hardy inequality}

To prove Theorem \ref{hardyfolprop} we will use the tangential derivatives to the level sets of $u$, defined by
\begin{equation*}
\label{deltas}
\delta_i \varphi := \partial_i \varphi - (\partial_k \varphi) \nu^k\nu^i
\end{equation*}
for $i=1,\ldots,n$. Here, as in the rest of the paper, we used the standard summation convention
over repeated indices. Recalling the definition \eqref{gradtang} of the tangential gradient $\nabla_T$, 
it is easy to verify that
\begin{equation}
\label{tanggrad}
\sum_{i=1}^n (\delta_i \varphi)^2 =|\nabla_T\varphi|^2.
\end{equation}
Using that $\nu^i\partial_k\nu^i=0$ (a consequence of $|\nu|^2\equiv 1$), one can verify the following
identities defining the mean curvature of the level sets of $u$:
\begin{equation*}
 \label{meancurv}
 H:=\sum_{i=1}^n \delta_i\nu^i=\sum_{i=1}^n \partial_i\nu^i= \diverg \nu.
\end{equation*}

We are now ready to state and prove the formula of integration by parts on the whole family of level sets of $u$.
Since $\nu^{i}$ and $\delta_i$ are only defined where $|\nabla u|$ does not vanish, 
the next integrals are computed on this set.

\begin{lemma}
 \label{intpartslem}
Let $\Omega\subset \mathbb{R}^n$ be a smooth bounded domain and  
$u$ be any $C^\infty(\overline\Omega)$ function. Let $\varphi$ and $\psi$ belong to $C^1(\overline\Omega)$. 
Assume that either $u|_{\partial\Omega}\equiv 0$ or $\varphi|_{\partial\Omega}\equiv 0$.

Then, for every $i\in\{1,\ldots,n\}$,  we have
 \begin{equation}
  \label{intparts}
  \int_{\Omega\cap\{|\nabla u|>0\}} |\nabla u| (\delta_i\varphi)\psi \,dx=
  -\int_{\Omega\cap\{|\nabla u|>0\}} |\nabla u| \varphi\, \delta_i\psi \,dx
  +\int_{\Omega\cap\{|\nabla u|>0\}} |\nabla u| H\nu^i\varphi\psi \,dx.
 \end{equation}
\end{lemma}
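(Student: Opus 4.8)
The plan is to reduce the identity \eqref{intparts} to the ordinary divergence theorem in $\Omega$ by introducing a suitable vector field. The key observation is that $\delta_i$ is a tangential derivative, and the natural object to integrate by parts is the product $|\nabla u|\,\varphi\,\psi$ times a tangential direction. Concretely, I would work on the open set $U:=\Omega\cap\{|\nabla u|>0\}$, where $\nu=\nabla u/|\nabla u|$ and all the $\delta_i$, $\nu^i$, $H$ are smooth, and first establish the pointwise product-type identity
\[
|\nabla u|\,(\delta_i\varphi)\,\psi + |\nabla u|\,\varphi\,(\delta_i\psi) = \diverg\!\big(|\nabla u|\,\varphi\,\psi\,(e_i - \nu^i\nu)\big) - \varphi\psi\,\diverg\!\big(|\nabla u|\,(e_i-\nu^i\nu)\big),
\]
where $e_i$ is the $i$-th coordinate vector, so that $e_i-\nu^i\nu$ is the projection of $e_i$ onto the tangent space of the level set. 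Writing out $\diverg(|\nabla u|(e_i-\nu^i\nu)) = \partial_i|\nabla u| - \partial_k(|\nabla u|\nu^i\nu^k)$ and using $\partial_k(|\nabla u|\nu^k)=\partial_k\partial_k u=\Delta u$ together with $\nu^k\partial_k|\nabla u| = \langle\nabla|\nabla u|,\nu\rangle$, one finds after a short computation that this divergence equals $|\nabla u|\,H\,\nu^i$ — this is essentially the identity $H=\diverg\nu$ stated just before the lemma, combined with $\delta_i|\nabla u| = \partial_i|\nabla u| - \nu^i\langle\nabla|\nabla u|,\nu\rangle$ and the fact that $\langle\nabla|\nabla u|,\nu\rangle = \partial_k u\,\partial_{ik}u/|\nabla u|$ is precisely the piece that cancels. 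Once the pointwise identity holds on $U$, integrating it over $U$ and applying the divergence theorem to the vector field $V:=|\nabla u|\,\varphi\,\psi\,(e_i-\nu^i\nu)$ yields \eqref{intparts}, provided the boundary terms vanish.

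I expect the handling of the boundary terms to be the main technical point, because $U$ is not all of $\Omega$: its boundary consists of $\partial\Omega\cap\{|\nabla u|>0\}$ together with the critical set $\{|\nabla u|=0\}$. On the portion of $\partial\Omega$ the vector field $V$ carries a factor $\varphi\psi$ or, when $u|_{\partial\Omega}\equiv0$, the tangential projection $e_i-\nu^i\nu$ of $e_i$ is tangent to $\partial\Omega$ (since $\nu\perp\partial\Omega$ there as $u$ vanishes on $\partial\Omega$), so $V\cdot n_{\partial\Omega}=0$; under the alternative hypothesis $\varphi|_{\partial\Omega}\equiv0$ the field $V$ itself vanishes. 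Thus the contribution from $\partial\Omega$ is zero in either case. The more delicate issue is the critical set: near $\{|\nabla u|=0\}$ the normal $\nu$ is undefined, but the vector field $V=|\nabla u|\varphi\psi(e_i-\nu^i\nu)$ stays bounded (it is $|\nabla u|$ times bounded quantities) and in fact extends continuously by $0$, while its divergence $|\nabla u|H\nu^i - |\nabla u|\varphi\,\delta_i\psi - |\nabla u|\psi\,\delta_i\varphi$ is integrable — the factor $|\nabla u|H$ is controlled by $|D^2u|$, which is bounded since $u\in C^\infty(\overline\Omega)$. To make this rigorous I would excise an $\varepsilon$-neighborhood of the critical set (or of the sublevel set $\{|\nabla u|<\varepsilon\}$), apply the divergence theorem on the smooth domain that remains, and let $\varepsilon\to0$, using that the flux of $V$ across $\{|\nabla u|=\varepsilon\}$ tends to $0$ because $|V|\leq C\varepsilon$ there while the $(n-1)$-dimensional measure of that hypersurface stays bounded (by the coarea formula applied to $|\nabla u|$, for a.e.\ $\varepsilon$). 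Standard density/approximation arguments, or Sard's theorem applied to $|\nabla u|$, make the choice of a.e.\ $\varepsilon$ legitimate.

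In summary, the steps are: (1) derive the pointwise identity on $U$ expressing $|\nabla u|(\delta_i\varphi)\psi + |\nabla u|\varphi(\delta_i\psi)$ as a divergence minus $|\nabla u|H\nu^i\varphi\psi$; (2) verify the algebraic identity $\diverg(|\nabla u|(e_i-\nu^i\nu)) = |\nabla u|H\nu^i$ using $H=\diverg\nu$, $\partial_k(|\nabla u|\nu^k)=\Delta u$, and $\nu^i\partial_k\nu^i=0$; (3) integrate over $U$ with the critical set excised, apply the divergence theorem; (4) check the boundary flux on $\partial\Omega$ vanishes under each of the two stated hypotheses; (5) let the excision parameter tend to zero, controlling the flux across $\{|\nabla u|=\varepsilon\}$ by the coarea formula. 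The only genuine obstacle is step (5) — the behavior at critical points — and it is handled by the boundedness of $D^2u$ together with a coarea estimate, not by any deep tool.
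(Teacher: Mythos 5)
Your strategy --- expressing the integrand as a divergence of the tangentially projected vector field $V=|\nabla u|\,\varphi\psi\,(e_i-\nu^i\nu)$, integrating over the regular set with an $\varepsilon$-excision of the critical set, and letting $\varepsilon\to 0$ --- is essentially the same route the paper takes (the paper integrates by parts on $R_\varepsilon=\{|\nabla u|>\varepsilon\}$ after reducing to $\psi\equiv 1$, and identifies the bulk term $-\int|\nabla u|H\nu^i\varphi\,dx$ using exactly the identities $\partial_j|\nabla u|=u_{kj}\nu^k$, $\nu^k\partial_j\nu^k=0$, and the formula for $\partial_j\nu^i$). Your treatment of the $\partial\Omega$ boundary contribution under each of the two alternative hypotheses is also correct and matches the paper.

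Two points need fixing. First, a sign: you claim $\diverg\bigl(|\nabla u|(e_i-\nu^i\nu)\bigr)=|\nabla u|H\nu^i$, but the correct identity is $\diverg\bigl(|\nabla u|(e_i-\nu^i\nu)\bigr)=-|\nabla u|H\nu^i$. Writing $|\nabla u|\nu^i\nu^k=u_i\nu^k$, one has $\partial_k(u_i\nu^k)=u_{ik}\nu^k+u_i\,\diverg\nu=\partial_i|\nabla u|+u_iH$, so the two $\partial_i|\nabla u|$ terms cancel and one is left with $-u_iH=-|\nabla u|\nu^iH$. With your sign, the pointwise identity would produce the wrong sign on the mean-curvature term in \eqref{intparts}, so this is worth checking rather than treating as a typo. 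Second --- and this is the genuine gap --- you assert that the flux across $\{|\nabla u|=\varepsilon\}$ tends to zero ``because $|V|\leq C\varepsilon$ there while the $(n-1)$-dimensional measure of that hypersurface stays bounded (by the coarea formula\dots).'' The coarea formula does \emph{not} give this boundedness: it only controls $\int_0^\delta\mathcal{H}^{n-1}(\{|\nabla u|=\varepsilon\})\,d\varepsilon$, so the surface measure is finite for a.e.\ $\varepsilon$, but nothing stops it from blowing up as $\varepsilon\to0$ even for $C^\infty$ functions (think of a smooth $u$ whose critical set has an accumulating, Cantor-like structure). The paper explicitly flags this obstruction and resolves it by \emph{averaging} the integration-by-parts identity over $\varepsilon\in(0,\delta)$: the averaged flux is bounded, after coarea, by $\frac{C}{\delta}\int_{\{0<|\nabla u|<\delta\}}|\nabla u|\,dx\leq C\,|\{0<|\nabla u|<\delta\}|\to0$. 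Equivalently, one can extract a sequence $\varepsilon_j\to0$ along which $\varepsilon_j\,\mathcal{H}^{n-1}(\{|\nabla u|=\varepsilon_j\})\to0$ from that same averaged bound. Without one of these two devices, your passage to the limit does not close.
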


Through the coarea formula, \eqref{intparts} follows directly from the well known formula of integration
by parts on hypersurfaces of Euclidean space. However, for completeness, we next give a simple proof of it
where we take advantage of having the foliation by level sets.

\begin{proof}[Proof of Lemma \ref{intpartslem}]
It suffices to prove the identity with $\psi\equiv 1$, that is,
\begin{equation}
\label{hardy1}
 \int_{\Omega\cap\{|\nabla u|>0\}} |\nabla u| \delta_i\varphi \,dx   =  \int_{\Omega\cap\{|\nabla u|>0\}} 
 |\nabla u| H\nu^i\varphi\,dx   .
\end{equation}
Indeed, by replacing here $\varphi$ by $\varphi\psi$, one concludes \eqref{intparts}. Through the rest of the paper,
sometimes we will use the notation
$$
\phi_i := \partial_i \phi \qquad\text{and}\qquad \phi_{ij} := \partial_{ij} \phi
$$
for first and second order partial derivatives.

Since we will integrate by parts in $R:=\{x\in\Omega : |\nabla u(x)|>0\}$ (a set that might not be smooth),  
we consider the open sets
$$
R_{\varepsilon}:= \{x\in\Omega : |\nabla u(x)|>\varepsilon\}
$$
for $\varepsilon >0$.
Since $|\nabla u|$ is a smooth function in $R$, Sard's theorem ensures  
that $\Omega\cap\partial R_{\varepsilon}=R\cap\partial R_{\varepsilon}$ is a smooth 
hypersurface for almost all $\varepsilon$.  
%we take a sequence $\{R_{m}\}$
%of smooth domains with $\overline R_{m}\subset R_{m+1}\subset R$ and $\cup_{m}R_{m}=R$. It is
%also possible to have 
%\begin{equation}
%\label{boundaries}
%\partial R_{m}\subset \{x\in R : \dist(x,\partial R)\leq 1/m\}.
%\end{equation}
Denote by $\nu_{R_{\varepsilon}}$ the exterior unit normal to $R_{\varepsilon}$ 
---not to be confused with the normal $\nu=\nabla u/|\nabla u|$ to the level sets of $u$.
Since part of $\partial R_\varepsilon$ could be contained in $\partial\Omega$, and another part in~$\Omega$,
the open set $R_{\varepsilon}$ is piecewise smooth.
Hence, we can integrate by parts in $R_{\varepsilon}$ to get

\begin{align}\label{intp1}
\int_{R_{\varepsilon}}  |\nabla u|\delta_i \varphi \,dx   
&=  \int_{R_{\varepsilon}}  |\nabla u|  \left( \varphi_i - \varphi_k \nu^k \nu^i
\right) \,dx \\
\label{intp2}&= - \int_{R_{\varepsilon}}  \left(\partial_i |\nabla u| -  \partial_k (|\nabla u| \nu^k \nu^i) 
\right) \varphi 
\,dx \\
\label{intp3}&\hspace{1cm}+\int_{\partial R_{\varepsilon}}  |\nabla u| \varphi \left( \nu_{R_{\varepsilon}}^i -
\nu^k\nu^i\nu_{R_{\varepsilon}}^k \right)\,dV.
\end{align}

Note that the first integral in \eqref{intp1} tends to the left hand side of \eqref{hardy1} as  
$\varepsilon\to 0$.

%We conclude that
%$$
%\int_{\Omega}  |\nabla u|\delta_i \varphi \,dx   =  - \int_{R}  \left(\partial_i |\nabla u| -  \partial_k (|\nabla u| 
%\nu^k \nu^i) \right) \varphi \,dx.
%$$

Next, we deal with \eqref{intp2}. Since   
\begin{equation*}
\label{Eq:PartialModulusOfGradv}
\partial_j |\nabla u| = \dfrac{u_{k j} u_k}{|\nabla u|} =u_{k j} \nu^k,
\end{equation*}
we deduce
\begin{align*}
\label{Eq:ProofIntegrationByPartsEq1}
 \int_{R_{\varepsilon}}  \left(\partial_i |\nabla u| -  \partial_k (|\nabla u| \nu^k \nu^i) \right) \varphi 
\,dx  &\\
&\hspace{-4.5cm}=  \int_{R_{\varepsilon}}   \left( u_{k i} \nu^k  -  u_{j k} \nu^j \nu^k \nu^i  - |\nabla u| \partial_k ( \nu^k \nu^i)  \right) \varphi \,dx \\
&\hspace{-4.5cm}=  \int_{R_{\varepsilon}}   \left( u_{k i} \nu^k  -  u_{j k} \nu^j \nu^k \nu^i  - |\nabla u| \delta_k ( \nu^k \nu^i)    - |\nabla u| \partial_j ( \nu^k \nu^i) \nu^j\nu^k   \right) \varphi \,dx.
\end{align*}
From this, using that $\nu^k\delta_k\psi\equiv 0$ for all functions $\psi$, that $\nu^k\partial_j\nu^k\equiv 0$, 
and that 
\begin{equation}\label{bddnormal}
\partial_j \nu^i = \partial_j \left( \dfrac{u_i}{|\nabla u|} \right) =  \dfrac{u_{i j}}{|\nabla u|} - 
\dfrac{u_{kj} u_{k} u_i}{|\nabla u|^3},
\end{equation}
we conclude
\begin{align*}
 \int_{R_{\varepsilon}}  \left(\partial_i |\nabla u| -  \partial_k (|\nabla u| \nu^k \nu^i) \right) \varphi 
\,dx  &\\
&\hspace{-4.5cm}
= \int_{R_{\varepsilon}}   \left( u_{k i} \nu^k  -  u_{j k} \nu^j \nu^k \nu^i  - |\nabla u| H \nu^i  
- \nu^j ( u_{ij}-u_{kj}\nu^k \nu^i)  \right) \varphi \,dx\\
&\hspace{-4.5cm}=- \int_{R_{\varepsilon}}   |\nabla u| H \nu^i   \varphi \,dx. 
\end{align*}
Since by \eqref{bddnormal} we have that $|\nabla u| |H| \leq C|D^2 u|\leq C$ for some constants $C$,
the last integral tends, up to its sign and as  
$\varepsilon\to 0$, to the right hand side of \eqref{hardy1}.

Finally, we deal with the boundary term \eqref{intp3}, which is the most delicate. 
Recall that we are assuming that either $u|_{\partial\Omega}\equiv 0$ or $\varphi|_{\partial\Omega}\equiv 0$, 
and that $\nu$ is a normal to the level sets of $u$. Thus, in the first case $u|_{\partial\Omega}\equiv 0$, 
we will have $\nu=\pm\nu_{R_{\varepsilon}}$ at every point on $\partial R_{\varepsilon}\cap\partial \Omega$. Hence
$\int_{\partial R_{\varepsilon}\cap \partial \Omega}  |\nabla u| \varphi ( \nu_{R_{\varepsilon}}^i -
\nu^k\nu^i\nu_{R_{\varepsilon}}^k )\,dV=0$. In the second case, $\varphi|_{\partial\Omega}\equiv 0$, 
this last integral will vanish. Therefore, \eqref{intp3} is equal to
\begin{equation}\label{avedelta}
\int_{\Omega \cap\partial R_{\varepsilon}}  |\nabla u| \varphi ( \nu_{R_{\varepsilon}}^i -
\nu^k\nu^i\nu_{R_{\varepsilon}}^k )\,dV =
\int_{\Omega \cap\{ |\nabla u|=\varepsilon\}}  |\nabla u| \varphi ( \nu_{R_{\varepsilon}}^i -
\nu^k\nu^i\nu_{R_{\varepsilon}}^k )\,dV.
\end{equation}
Even that the absolute value of the integrand is bounded by a constant times $\varepsilon$, we can not
conclude that \eqref{avedelta} tends to zero as $\varepsilon\to 0$ ---since we do not have control on the 
surface measure of $\{ |\nabla u|=\varepsilon\}$. To remedy this, given $\delta >0$, we average 
\eqref{intp1}-\eqref{intp3} in $\varepsilon\in (0,\delta)$. We already know that the terms
\eqref{intp1} and \eqref{intp2} will converge, as $\delta\downarrow 0$, to the desired quantities in \eqref{hardy1}.
Therefore, it suffices to check that 
\begin{equation*}
\frac{1}{\delta}\int_0^\delta d\varepsilon \int_{\Omega \cap\{ |\nabla u|=\varepsilon\}} 
dV\,  |\nabla u| \varphi ( \nu_{R_{\varepsilon}}^i -
\nu^k\nu^i\nu_{R_{\varepsilon}}^k )
\end{equation*}
tends to zero as $\delta\downarrow 0$. But using the coarea formula, the absolute value of this 
quantity can be bounded by
\begin{align*}
\frac{C}{\delta}\int_0^\delta d\varepsilon \int_{\Omega \cap\{ |\nabla u|=\varepsilon\}} dV\,  |\nabla u| 
& 
\\
&\hspace{-3.5cm}
\leq \frac{C}{\delta}\int_{\{ 0<|\nabla u|<\delta \}}    \left|\nabla |\nabla u|\right|  |\nabla u|\, dx
= \frac{C}{\delta}\int_{\{ 0<|\nabla u|<\delta \}} \left|\nabla (|\nabla u|^2/2)\right| \, dx
\\
&\hspace{-3.5cm}
\leq \frac{C}{\delta}\int_{\{ 0<|\nabla u|<\delta \}} |D^2 u||\nabla u| \, dx
\leq \frac{C}{\delta}\int_{\{ 0<|\nabla u|<\delta \}} |\nabla u| \, dx
\\
&\hspace{-3.5cm}
\leq C \left| \{ 0<|\nabla u|<\delta \} \right|
\end{align*}
for different constants $C$. The last quantity tends to zero as $\delta\downarrow 0$, and thus 
the proof is finished. 
\end{proof}

We can now establish the geometric Hardy inequality.

\begin{proof}[Proof of Theorem \ref{hardyfolprop}]
First, note that (denoting by $\delta_{ik}$ the Kronecker delta)
\begin{equation}
\label{Eq:HadyTypeIneqProof1}
    \delta_i x_i:= \sum_{i=1}^n \delta_i x_i =  \sum_{i=1}^n (1-\delta_{ik} \nu^k\nu^i) = n - 1.
\end{equation}
We next use this fact and the geometric integration by parts formula of Lemma~\ref{intpartslem} to
compute $\int_{\Omega}  |\nabla u| \varphi^2 r_y^{-\alpha} \,dx$. We may assume, after a translation,
that $y=0$; we will denote then $r_y=r=|x|$. Recall that 
$\alpha\in [0,n-1)$, $\varphi\in C^1(\overline\Omega)$, and 
either $u|_{\partial\Omega}\equiv 0$ or $\varphi|_{\partial\Omega}\equiv 0$.
The following computations must be done with $r^{-\alpha}$ replaced by a nice regularization $\zeta_\varepsilon$
of it in an $\varepsilon$-neighborhood of $y=0$. Since all terms in the following
computations are given by integrable functions, through dominated convergence we can let
$\varepsilon\to 0$ to justify the following equalities, which we write directly for $r^{-\alpha}$
instead of $\zeta_\varepsilon$. 

Since
$$
x_i \delta_i r^{-\alpha}=\sum_{i=1}^n x_i \delta_i r^{-\alpha} =  -\alpha r^{-\alpha} 
\left( 1 - \left ( x\cdot \nu/r\right )^2\right),
$$
we have
\begin{align*}
(n-1) \int_{\Omega\cap\{|\nabla u|>0\}}  |\nabla u| \varphi^2 r^{-\alpha} \,dx &=
 \int_{\Omega\cap\{|\nabla u|>0\}}  |\nabla u| \varphi^2 r^{-\alpha} \delta_i x_i \,dx \\
& \hspace{-5.5cm} = - \int_{\Omega\cap\{|\nabla u|>0\}}  
|\nabla u| \left( 2 \varphi (\delta_i \varphi) r^{-\alpha}  x_i 
+ \varphi^2 x_i  \delta_i r^{-\alpha} - H \varphi^2 r^{-\alpha} x_i \nu^i \right) \,dx \\
&\hspace{-5.5cm} =- \int_{\Omega\cap\{|\nabla u|>0\}}  |\nabla u| \left( 2 \varphi (\delta_i \varphi) r^{-\alpha}  
x_i   -\alpha \varphi^2 r^{-\alpha} \left( 1 - \left (  x\cdot \nu/r\right )^2\right ) -
 H \varphi^2 r^{-\alpha} x_i \nu^i \right) \,dx .
\end{align*}
Therefore,
\begin{align*}
(n-1) \int_{\Omega\cap\{|\nabla u|>0\}}  |\nabla u| \varphi^2 r^{-\alpha} \,dx 
- \alpha  \int_{\Omega\cap\{|\nabla u|>0\}}  |\nabla u| \varphi^2 r^{-\alpha} 
\left(  1 - \left (  x\cdot \nu/r\right )^2\right) \,dx  \\
&\hspace{-12cm} =  - \int_{\Omega\cap\{|\nabla u|>0\}}  |\nabla u| \varphi r^{1-\alpha}  \dfrac{x_i}{r} \left( 2 \delta_i \varphi  -
 H \varphi \nu^i\right) \,dx.
 \end{align*}
 
 Finally, note that
 \begin{align*}
 \sum_{i=1}^n (2\delta_i\varphi -H\varphi  \nu^i)^2 &= 
 \sum_{i=1}^n \left( 4(\delta_i\varphi)^2 -4H(\delta_i\varphi)\varphi  \nu^i +H^2\varphi^2(\nu^i)^2\right)\\
 &=\sum_{i=1}^n \left( 4(\delta_i\varphi)^2 +H^2\varphi^2(\nu^i)^2\right) \\
 &= 4 |\nabla_T \varphi |^2 + {H}^2\varphi^2,
 \end{align*}
 where we have used \eqref{tanggrad}. We now conclude, using the Cauchy-Schwarz inequality,
\begin{align*}
(n-1-\alpha) \int_{\Omega\cap\{|\nabla u|>0\}}  |\nabla u| \varphi^2 r^{-\alpha} \,dx 
+ \alpha  \int_{\Omega\cap\{|\nabla u|>0\}}  \frac{u_{r}^2}{|\nabla u|} \varphi^2 r^{-\alpha}  \,dx  &\\
&\hspace{-11.8cm} \leq  \left ( \int_{\Omega\cap\{|\nabla u|>0\}} 
|\nabla u|  \varphi^2 r^{-\alpha} \,dx\right )^{1/2} \left ( \int_{\Omega\cap\{|\nabla u|>0\}}  
|\nabla u|  \big (4 |\nabla_T \varphi |^2 + {H}^2\varphi^2 \big )  r^{2-\alpha} \,dx\right )^{1/2}.
\end{align*}
Since $u_{r}^2/|\nabla u|\leq |\nabla u|$ vanishes in
the critical set $\{|\nabla u|=0\}$, we can replace by the whole $\Omega$ the sets of integration in the 
left hand side of the inequality. This establishes \eqref{hardyfol}. 
From it, the last two statements of the theorem follow easily.
\end{proof}

\section{Proof of the $L^\infty$ bounds} 

Here we use the geometric Hardy inequality together with the Sternberg-Zumbrun stability condition
to prove our main estimates.

\begin{proof}[Proof of Proposition \ref{weightest}]
Given $\delta >0$, let $K_\delta =\{ x\in\Omega : \dist(x,\partial\Omega)\geq\delta\}$.
For $\alpha\in [0,n-1)$, we apply the geometric Hardy inequality \eqref{hardyfol} with
 $$
\varphi := |\nabla u|^{1/2} \zeta,
$$
where 
\begin{equation}
 \label{propzeta1}
 \zeta_{|\partial\Omega}\equiv 0\qquad\text{ and }\qquad \zeta\equiv 1 \text{ in } K_{\delta/2}.
\end{equation}
To be totally rigorous, the proof should be carried out with $\varphi$ replaced by
$$
\varphi_{\varepsilon} := (|\nabla u|^{2} +\kappa^2)^{1/4}\zeta,
$$
and letting $\kappa$ tend to zero at the end. The function $\zeta$, which depends only on 
$\Omega$ and~$\delta$, is taken to be smooth and satisfy
\begin{equation}
 \label{propzeta2}
 |\nabla\zeta|\leq  3/\delta.
\end{equation}

To simplify notation, for $y\in K_\delta$, define
$$
I :=  \int_{\Omega}  |\nabla u|^2 r_y^{-\alpha} \zeta^2 \,dx 
$$
and 
$$
I_r := \int_{\Omega}  u_{r_y}^2 r_y^{-\alpha} \zeta^2 \,dx.
$$
Given $\varepsilon>0$, inequality \eqref{hardyfol} and Cauchy-Schwarz give that
\begin{align*}
\left( (n - 1 - \alpha)I + \alpha I_r \right)^2
&\leq
I  \int_{\Omega\cap\{|\nabla u|>0\}} \big (4|\nabla u|\,\, |\nabla_T(|\nabla u|^{1/2} \zeta) |^2 
+ {H}^2|\nabla u|^2 \zeta^2\big ) 
r_y^{2-\alpha}   \,dx\\
&\hspace{-3.3cm}\leq
I  \int_{\Omega\cap\{|\nabla u|>0\}}  
\big( 4|\nabla u| \, \left| \frac{1}{2}|\nabla u|^{-1/2} (\nabla_T |\nabla u|) \zeta 
+ |\nabla u|^{1/2} \nabla_T\zeta \right|^2  
\\&
\hspace{5.5cm} 
+{H}^2|\nabla u|^2 \zeta^2 \big) r_y^{2-\alpha}   \,dx
\\&\hspace{-3.3cm}\leq
I  \int_{\Omega\cap\{|\nabla u|>0\}}  
\big( (1+\varepsilon) |\nabla_T|\nabla u||^2 + {H}^2|\nabla u|^2 \big) r_y^{2-\alpha}  \zeta^2 \,dx 
\\&\hspace{-1.5cm} 
+ I \, \frac{C}{\varepsilon}  \int_{\Omega\setminus K_{\delta/2}}  
|\nabla u|^2 |\nabla \zeta|^2 r_y^{2-\alpha} \,dx
\\&\hspace{-3.3cm}\leq
I  \int_{\Omega\cap\{|\nabla u|>0\}}   \big( (1+\varepsilon) |\nabla_T|\nabla u||^2 + {H}^2|\nabla u|^2 \big) r_y^{2-\alpha}  
\zeta^2 \,dx 
%\\ &\hspace{-1.5cm} 
+ I \, \frac{C}{\varepsilon}  \int_{\Omega\setminus K_{\delta/2}}  
|\nabla u|^2 \,dx;
\end{align*}
in the last inequality we have used \eqref{propzeta2} and that 
\begin{equation}\label{ineqry}
\delta/2 \leq r_y(x)=|x-y|\leq\diam(\Omega)
\qquad\text{for }  x\in \Omega\setminus K_{\delta/2}\text{ and } y\in K_\delta
\end{equation}
---both the lower and the upper bound are needed here, since the sign of $2-\alpha$ will depend on the dimension 
(once we choose $\alpha$). The constant $C$ depends only on $\Omega$ and $\delta$.

Taking into account that $ {H}^2 \leq (n-1)|A|^2$ and $\Omega\setminus K_{\delta/2}\subset
\Omega\setminus K_{\delta}$, we deduce
\begin{align*}
\left( (n - 1 - \alpha)I + \alpha I_r \right)^2&
\\ & \hspace{-4cm}
\leq I  (1+\varepsilon)(n-1) \int_{\Omega\cap\{|\nabla u|>0\}}  \big( |\nabla_T|\nabla u||^2 + |A|^2|\nabla u|^2 \big) r_y^{2-\alpha}  
\zeta^2 \,dx 
%\\&\hspace{1.5cm} 
+ I \frac{C}{\varepsilon} \Vert \nabla u\Vert^2_{L^2({\Omega\setminus K_{\delta}})}.
\end{align*}
Note that the previous bound is sharp in the radial case since $ {H}^2 = (n-1)|A|^2$ 
and $\nabla_T|\nabla u|=0$ when the level sets are concentric spheres.

We now use the stability condition in the geometric form, estimate \eqref{semi1}, with
$$
\eta = r_y^{(2-\alpha)/2} \zeta.
$$
The following computations must be done with $r_y^{(2-\alpha)/2}$ replaced by a nice regularization 
of it in a small $\kappa$-neighborhood of $y$. Since all terms in the following
computations are given by integrable functions, through dominated convergence we can let
$\kappa\to 0$ to justify the following equalities written directly for $r_y^{(2-\alpha)/2}$.

We deduce that
\begin{align*}
\left( (n - 1 - \alpha)I + \alpha I_r \right)^2  &\\
&\hspace{-3.3cm} \leq  I (1+\varepsilon)(n-1) \int_{\Omega}  |\nabla u|^2 \left | \nabla  \left (r_y^{(2-\alpha)/2} \zeta \right )
\right |^2 \,dx  + I \frac{C}{\varepsilon} \,  \Vert \nabla u\Vert^2_{L^2({\Omega\setminus K_{\delta}})}\\
&\hspace{-3.3cm} \leq  (1+2\varepsilon) (n-1) \frac{(\alpha-2) ^2}{4}  I^2 +
 I \frac{C}{\varepsilon}\int_{\Omega}  |\nabla u|^2 r_y^{2-\alpha} |\nabla\zeta|^2 \,dx +
I\, \frac{C}{\varepsilon} \Vert \nabla u\Vert^2_{L^2({\Omega\setminus K_{\delta}})}\\
&\hspace{-3.3cm} \leq  (1+2\varepsilon) (n-1) \frac{(\alpha-2) ^2}{4}  I^2 +
I\, \frac{C}{\varepsilon} \Vert \nabla u\Vert^2_{L^2({\Omega\setminus K_{\delta}})};
\end{align*}
as before, we have used Cauchy-Schwarz, \eqref{propzeta1}, \eqref{propzeta2}, and \eqref{ineqry}. 
The constant $C$ still depends only on $\Omega$ and $\delta$. 

Let now $\alpha\in [0,n-1)$ satisfy \eqref{condalpha}. We can choose $\varepsilon>0$ to depend only on 
$n$ and $\alpha$ such that $(1+2\varepsilon) (n-1) (\alpha-2)^2/4 < (n - 1 - \alpha)^{2}$. 
Since $(n - 1 - \alpha)^{2}I^{2}\leq ((n - 1 - \alpha)I + \alpha I_r)^2$, our last estimate 
establishes \eqref{estgrad}. We have used again \eqref{propzeta1} and \eqref{ineqry}. 

Finally, if $\Omega$ is a ball centered at the origin, the stability of the solution $u$ leads to its radial
symmetry ---there is no need to assume that $u$ is positive here; see Proposition~ 1.3.4 of~\cite{Dupaigne}. 
As a consequence, with $y=0$, we have $I_{r}=I$ and our last bound gives
$$
(n - 1)^{2}I^2   \leq  (1+2\varepsilon) (n-1) \frac{(\alpha-2) ^2}{4}  I^2 +
I\, \frac{C}{\varepsilon} \Vert \nabla u\Vert^2_{L^2({\Omega\setminus K_{\delta}})}.
$$
Assuming now the weaker condition \eqref{condalpharad} on $\alpha$, we conclude \eqref{estgrad} with $y=0$ as before.
\end{proof}

With this bound in hand, we can now prove our main result.

\begin{proof}[Proof of Theorem \ref{Thm1}]
Assume that an exponent $\alpha$ can be chosen (depending only on $n$) such that
\begin{equation}
 \label{choice}
 n-2<\alpha< n-1 \qquad\text{and}\qquad (\alpha -2)^2/4 < (n-1-\alpha)^2/(n-1).
\end{equation}
Then, by Proposition \ref{weightest},
\begin{equation}
 \label{linfty1}
 \int_{\Omega}  |\nabla u|^2 r_y^{-\alpha} \,dx  \leq C \Vert \nabla u\Vert^2_{L^2(\Omega\setminus K_\delta)}
 \qquad \text{ for all } y\in K_\delta, 
\end{equation}
i.e., such that $\dist(y,\partial\Omega)\geq\delta$.
Here $C$ is a constant depending only on $\Omega$ and~$\delta$.

It is easy to see that \eqref{choice} can be accomplished for some $\alpha$ if $n=3$ or $n=4$. 
For this just take, in both cases, $\alpha=n-2+\varepsilon$ for some small $\varepsilon>0$.

In the radial case, Proposition \ref{weightest} ensures \eqref{linfty1} if there is
an exponent $\alpha$ such that
\begin{equation}
 \label{choiceead}
 n-2<\alpha< n-1 \qquad\text{and}\qquad (\alpha -2)^2/4 < n-1.
\end{equation}
Its existence can be checked to hold if $2\leq n\leq 9$.

In both the nonradial and radial cases, it is simple to deduce the $L^\infty$ bound in $K_\delta$ from 
\eqref{linfty1} and the fact that $\alpha > n-2$. Indeed, take any $y\in K_\delta$ and use 
Lemma~7.16 of \cite{GT} with $\Omega$ in that lemma replaced by the ball $B_{\delta/2}(y)$. We deduce that
\begin{equation}\label{repres}
|u(y) - u_{S_y}| \leq  C \int_{\Omega} |x-y|^{1-n} |\nabla u(x)| \,dx= C \int_{\Omega} r_y^{1-n} |\nabla u|\,dx,
\end{equation}
where 
\begin{equation}\label{aveu}
 u_{S_y}:=\frac{1}{|B_{\delta/2}(y)|} \int_{B_{\delta/2}(y)} u \,dx
\end{equation}
and $C$ is a constant depending only on $n$ and $\delta$.
%Indeed, extending $u$ to be zero outside $\Omega$, for all $\sigma\in S^{n-1}$ we have 
%\begin{align*}
%u(y) &= - \int_0^{+\infty} \partial_s (u(y+s\sigma)) \,ds= - \int_0^{+\infty} \nabla u(y+s\sigma) \cdot \sigma \,ds\\
%&= - \int_0^{+\infty} s^{1-n} \, \nabla u(y+s\sigma) \cdot \sigma \ s^{n-1}\,ds.
%\end{align*}
%Integrating in $\sigma$ over $S^{n-1}$, we deduce \eqref{repres}. 
Now, using \eqref{linfty1}, the Cauchy-Schwarz inequality in \eqref{repres}, and that 
$$
\int_\Omega |x-y|^{2-2n+\alpha} \,dx \leq \frac{|S^{n-1}|}{\alpha-(n-2)} \diam(\Omega)^{\alpha-(n-2)},
$$
from \eqref{repres} and \eqref{aveu} we conclude estimate \eqref{est4}, i.e.,  
\begin{equation}
\label{est4new}
\Vert u\Vert_{L^\infty(K_\delta)} \leq 
C \left( \Vert  u\Vert_{L^1(\Omega)} + \Vert \nabla u\Vert_{L^2(\Omega\setminus K_\delta)}\right)
\end{equation} 
with $C$ depending only on $\Omega$ and $\delta$.

Assume now that $\Omega$ is a convex domain, $u$ solves problem \eqref{problem}, and $u$ is positive in $\Omega$. 
Then, the moving planes method gives the existence of small 
truncated cones of monotonicity for $u$ ---the cones having vertex at any point
in a sufficiently small neighborhood of $\partial\Omega$. This is a result of de Figueiredo-Lions-Nussbaum; 
see Proposition~3.2 of \cite{Cabre}. This leads to the bound 
\begin{equation}
\label{bdryest}
\Vert u\Vert_{L^\infty(\Omega\setminus K_{2\delta})}\leq C\Vert  u\Vert_{L^1(\Omega)}
\end{equation} 
for some constants $\delta>0$ and $C$ depending only on $\Omega$. 
We use this $L^\infty$ bound in $\Omega\setminus K_{2\delta}$ to control $f(u)$ in this set. 
We deduce, by interior and boundary elliptic regularity
for problem \eqref{problem}, 
stronger estimates in the smaller set $\Omega\setminus K_{\delta}$. In particular, we have
$\Vert \nabla u\Vert_{L^2(\Omega\setminus K_{\delta})}\leq C$ for some constant $C$
depending only on $\Omega$, $f$, and $\Vert  u\Vert_{L^1(\Omega)}$. This, combined with \eqref{est4new}, 
gives the desired bound \eqref{est5} since we also have 
$\Vert u\Vert_{L^\infty(\Omega\setminus K_{\delta})}
\leq\Vert u\Vert_{L^\infty(\Omega\setminus K_{2\delta})}\leq C\Vert  u\Vert_{L^1(\Omega)}$ by \eqref{bdryest}.

In the radial case, the argument to prove \eqref{est5} is the same, 
but there is no need to assume the solution $u$ to be positive. Indeed,  Proposition~ 1.3.4 of~\cite{Dupaigne} 
ensures that $u$ is radially symmetric and monotone in the radius $r$. Thus, up to changing $u$ by $-u$, 
we may assume the solution to be positive and radially decreasing. In particular, it suffices to estimate $u(0)$,
and this is done using \eqref{repres} and \eqref{estgrad} (with $y=0$) as in the general case.
\end{proof}

Finally, we give the proof of the $L^p$ estimates in dimension 5.

\begin{proof}[Proof of Corollary \ref{Corol5}]
Let us recall that, for $1\leq p<\infty$ and $0<\lambda\leq n$, the Morrey space $M^{p,\lambda}=M^{p,\lambda}(\R^n)$
is the space of $L^p_{\rm loc}(\R^n)$ functions $w$ for which
\begin{equation}\label{morrey}
\Vert w\Vert^{p}_{M^{p,\lambda}}:= \sup_{y\in\R^n,\, \rho >0} \rho^{\lambda -n}\int_{B_\rho(y)} |w(x)|^p <\infty,
\end{equation}
equipped with this norm.

Assume that $n=5$ 
and extend the functions $u$ and $\nabla u$ by zero outside $\Omega$. Condition \eqref{condalpha}
reads $0\leq \alpha <4$ and $(\alpha-2)^2 <(4-\alpha)^2$, and hence it is satisfied for all $\alpha\in (2,3)$. 
The boundary regularity on convex domains (see the last argument in the proof of Theorem~\ref{Thm1}) 
together with estimate \eqref{estgrad} of Proposition \ref{weightest} give that
$|\nabla u|\in M^{2,\lambda}$ if
$5- \lambda = n -\lambda \in (2,3)$, that is, if $\lambda\in (2,3)$. This result comes with an estimate.

Now, we invoke a result of D. R. Adams (see Proposition~3.1
and Theorems~3.1 and 3.2 in \cite{A}) that gives the following optimal embedding:  
$$
\text{if } 2<\lambda\leq n \text{ and } |\nabla u|\in M^{2,\lambda}, \text{ then }
u\in M^{\frac{2\lambda}{\lambda -2},\lambda}.
$$
In particular, $u\in L^{\frac{2\lambda}{\lambda -2}}(\Omega)$. Letting $\lambda\downarrow 2$, the proof
is completed.

Let us finish mentioning that this $L^{\frac{2\lambda}{\lambda -2}}$ estimate  
can be proved easily using the estimate for $\int_\Omega |\nabla u|^2 r_y^{-\alpha} dx$
without invoking Adams' result. For this, one uses \eqref{repres} and 
a similar argument to that in the proof of Young's inequality for convolutions.
\end{proof}

\section*{Acknowledgments}

\noindent
The author would like to thank the referee for a careful reading and some comments that improved the presentation.

\end{document}